\newcounter{dummycounter}
\DeclareSymbolFontAlphabet{\mathbb}{AMSb}
\DeclareSymbolFontAlphabet{\mathbbl}{bbold}
\tikzset{zlevel/.style={%
    execute at begin scope={\pgfonlayer{#1}},
    execute at end scope={\endpgfonlayer}
  }}
\tikzset{khdiff/.style={%
    black!30,opacity=1,->
  }}
\tikzset{knot/.style={%
    black,thick,preaction={draw,white,line width=3pt}
  }}
\tikzset{resol/.style={%
    black,thick
  }}
\tikzset{linelabel/.style={%
    outer sep=0,inner sep=1pt,fill=white,fill opacity=1,draw opacity=1
  }}
\tikzset{preservemap/.style={%
    ->,black,opacity=1
  }}
\tikzset{dropmap/.style={%
    ->,black!30,opacity=1
  }}
\tikzset{tightnode/.style={%
    outer sep=0,inner sep=1pt
  }}
\tikzset{tighternode/.style={%
    outer sep=0,inner sep=0pt
  }}
\tikzset{bfill/.style={%
    fill=white,fill opacity=0.5,text opacity=1
  }}
\definecolor{darkblue}{rgb}{0,0,0.4} 
\numberwithin{equation}{section}
\newtheorem{lem}{Lemma}[section]               
\newtheorem{theorem}[lem]{Theorem}
\newtheorem{lemma}[lem]{Lemma}
\newtheorem{porism}[lem]{Porism}
\newtheorem{corollary}[lem]{Corollary}               
\newtheorem{proposition}[lem]{Proposition}
\theoremstyle{definition}
\theoremstyle{remark}
\newtheorem{remark}[lem]{Remark}
\numberwithin{figure}{section}
\numberwithin{table}{section}
\newcommand{\R}{\mathbb{R}}
\newcommand{\FF}{\mathbb{F}}
\newcommand{\mf}{\mathfrak}
\newcommand{\wt}{\widetilde}
\newcommand{\ol}{\overline}
\newcommand{\from}{\colon}
\newcommand{\into}{\hookrightarrow}
\newcommand{\lra}{\longrightarrow}
\renewcommand{\th}{^{\text{th}}}
\newcommand{\SpinC}{\text{Spin}^{\text{C}}}
\newcommand{\HF}{\mathit{HF}}
\newcommand{\HFK}{\mathit{HFK}}
\newcommand{\HFa}{\widehat{\HF}}
\newcommand{\HFKa}{\widehat{\HFK}}
\DeclareMathOperator{\Id}{Id}
\newcommand{\Kh}{\mathit{Kh}}
\newcommand{\KhCx}{\mathit{CKh}}
\newcommand{\AKh}{\mathit{AKh}}
\newcommand{\AKhCx}{\mathit{ACKh}}
\newcommand{\AV}{\mathit{AV}}
\newcommand{\co}{\colon}
\newcommand{\bdy}{\partial}
\newcommand{\RR}{\R}
\DeclareMathOperator{\rank}{rank}
\newcommand{\ZZ}{\mathbb{Z}}
\DeclareMathOperator{\Cone}{Cone}
\newcommand{\BurnsideCat}{\mathscr{B}}
\newcommand{\AbelianGroups}{\mathsf{Ab}}
\newcommand{\Spaces}{\mathsf{Top}}
\newcommand{\KhSpace}{\mathscr{X}}
\newcommand{\AKhSpace}{A\KhSpace}
\newcommand{\AF}{\mathit{AF}}
\newcommand{\spinc}{\mathfrak{s}}
\begin{document}


\title{Khovanov homology of strongly invertible knots and their quotients}

\author{Robert Lipshitz}
\thanks{\texttt{RL was supported by NSF Grant DMS-1810893.}}
\email{\href{mailto:lipshitz@uoregon.edu}{lipshitz@uoregon.edu}}
\address{Department of Mathematics, University of Oregon, Eugene, OR 97403}

\author{Sucharit Sarkar}
\thanks{\texttt{SS was supported by NSF Grant DMS-1905717.}}
\email{\href{mailto:sucharit@math.ucla.edu}{sucharit@math.ucla.edu}}
\address{Department of Mathematics, University of California, Los Angeles, CA 90095}


\keywords{}

\date{\today}

\begin{abstract}
  We construct a spectral sequence relating the Khovanov homology of a
  strongly invertible knot to the annular Khovanov homologies of the
  two natural quotient knots. Using this spectral sequence, we
  re-prove that Khovanov homology distinguishes certain slice disks.
  We also give an analogous spectral sequence for $\HFa$ of the
  branched double cover.
\end{abstract}
\maketitle
\vspace{-1cm}


\tableofcontents


\section{Introduction}\label{sec:intro}

A knot $K\subset \RR^3$ is called \emph{strongly invertible} if $K$
intersects some straight line in exactly two points, and is preserved
setwise by rotation by $180^\circ$
around that line; this straight line is called the \emph{axis}.  While
strongly invertible knots have been studied for decades
(see~\cite{Sakuma-top-invert} and the references therein), they have
recently seen a surge in interest. For example, there is a
somewhat mysterious concordance group of strongly invertible
knots~\cite{Sakuma-top-invert}, as well as 
natural equivariant analogues of the slice
genus~\cite{BI-top-invert-g4}. Related to this, many of the pairs of
non-isotopic slice disks or more general slice surfaces which have
appeared in the literature recently come from strongly invertible
knots~\cite{Hay-top-corks,SS-kh-surf,HS-kh-exotic}, a phenomenon which
has led to connections with Heegaard Floer-theoretic
invariants~\cite{DMS-hf-invert}. In addition to Heegaard Floer
homology, Donaldson's diagonalization theorem~\cite{BI-top-invert-g4},
the $G$-signature theorem~\cite{AB-top-invert-AS}, and Khovanov
homology~\cite{Couture-kh-divides,Watson-kh-invert,Snape-kh-inversion,LW-kh-invert} have also been applied
recently to study strongly invertible knots.

The quotient of $K$ by its strong inversion is naturally an embedded arc with
boundary on the axis. By gluing this arc to part of the axis, we
obtain a quotient knot; see the first two pictures of
Figure~\ref{fig:9-46-quotients}. (In $S^3$, instead of $\RR^3$, there
are two equally natural choices of quotient knot.) The main goal of
this paper is to construct a spectral sequence relating the Khovanov
homology of a strongly invertible knot $K$ and a variant of the
Khovanov homology of its quotient.

Similar results have been proved before, for other kinds of
symmetries. Stoffregen-Zhang~\cite{SZ-kh-localization} and
Borodzik-Politarczyk-Silvero~\cite{BPS-kh-periodic} showed that there
is a spectral sequence relating the Khovanov homology of a periodic
knot (a knot preserved by rotation around an axis disjoint from it)
and the annular Khovanov homology of its quotient (see
also~\cite{Cornish-kh-loc,Zhang18:periodic}). An analogous result relating
the symplectic Khovanov homology of a 2-periodic knot and of its
quotient was proved earlier by Seidel-Smith~\cite{SS-kh-loc}. Using
the same technical tool, Hendricks proved a similar relationship for
knot Floer homology~\cite{Hen-hf-periodic}, and analogous results have
been given for other symmetries in Heegaard Floer
homology~\cite{Hen-hf-dcov,HLS-flexible,LT-hf-HH,LT-hf-covering,
  Large-hf-loc,HLL-hf-dcov}.

Like Stoffregen-Zhang's and Borodzik-Politarczyk-Silvero's spectral
sequence for periodic knots, the spectral sequence we construct for
strongly invertible knots relates the Khovanov homology of $K$ to the
annular Khovanov homology of its quotient $\ol{K}$. If $\tau$ denotes
the $180^\circ$ rotation around the axis then by a slight
$\tau$-equivariant perturbation of the knot $K$ we may assume that the
projection of $K$ to the plane perpendicular to the axis is a knot
diagram. Following the
literature~\cite{Boyle-top-alt-inv,BI-top-invert-g4}, we call such a
diagram \emph{intravergent}.  The quotient knot $\ol{K}$ may be viewed
as an annular knot in two natural ways, $\ol{K}_0$ and $\ol{K}_1$,
corresponding to taking the quotients of the $0$-resolution or the
$1$-resolution of the fixed crossing of the intravergent diagram $K$;
see Figure~\ref{fig:9-46-quotients}. (These quotients depend on the
diagram $K$; see Remark~\ref{rem:gr-shift} and
Proposition~\ref{prop:classical-invt}.)  The annular Khovanov chain
complexes of these knots are related by an \emph{axis-moving map}
$f^+\from\Sigma^{0,0,1}\AKhCx(\ol{K}_1)\to\AKhCx(\ol{K}_0)$, where
$\Sigma^{a,b,c}$ denotes a (homological, quantum, annular) trigrading
shift by $(a,b,c)$; we introduce the map $f^+$, which is a special case of
Akhmechet-Khovanov's maps associated to anchored
cobordisms~\cite{AK-kh-anchored}, in
Section~\ref{sec:moving}. By a slight abuse of notation, we
define the annular Khovanov chain complex of the pair of annular knots
$(\ol{K}_1,\ol{K}_0)$ to be the mapping cone of $f^+$,
\[
  \AKhCx(\ol{K}_1,\ol{K}_0)=\Cone\big(
  \Sigma^{0,0,1}\AKhCx(\ol{K}_1)\stackrel{f^+}{\to} \AKhCx(\ol{K}_0)
  \big),
\] 
and the annular Khovanov homology of the pair to be the homology of this complex which, over any
field $\FF$, is also (unnaturally) isomorphic to the
homology of the mapping cone of the induced map on homology,
\begin{align*}
  \AKh(\ol{K}_1,\ol{K}_0;\FF)&=H_*\Cone\big(
  \Sigma^{0,0,1}\AKhCx(\ol{K}_1;\FF)\stackrel{f^+}{\to} \AKhCx(\ol{K}_0;\FF)
  \big)\\&\cong H_*\Cone\big(
  \Sigma^{0,0,1}\AKh(\ol{K}_1;\FF)\stackrel{f^+}{\to}
  \AKh(\ol{K}_0;\FF) \big).
\end{align*}

\begin{theorem}\label{thm:main}
  Given a strongly invertible knot $K$ with annular quotients
  $\ol{K}_0$, $\ol{K}_1$ there is a spectral sequence with the
  following properties:
  \begin{enumerate}[label=($\Theta$-\arabic*),leftmargin=*]
  \item The $E^1$-page is
    $\Kh(K;\FF_2)\otimes\FF_2[\theta^{-1},\theta]$ with
    $d^1$-differential the map $\theta(\Id+\tau_*)$,
    \[
      \begin{tikzpicture}[xscale=3.5]
        \foreach\i in {-1,0,1}{
          \node (t\i) at (\i,0) {$\theta^{\i}\Kh(K;\FF_2)$};}
        \node (t-2) at (-1.8,0) {$\cdots$};
        \node (t2) at (1.8,0) {$\cdots$,};
        
        \foreach \i [count=\j from -1] in {-2,-1,0,1}{
          \draw[->] (t\i)--(t\j) node[midway,anchor=south] {\small $\theta(\Id+\tau_*)$};
        }
      \end{tikzpicture}
    \]
    where $\tau_*$ is induced by the strong inversion.
  \item\label{item:ss-gr} The $d^r$-differential preserves the quantum grading and
    increases the $\theta$-power grading by $r$.
  \item\label{item:main-thm-e-infty} The spectral sequence converges to
    $\AKh(\ol{K}_1,\ol{K}_0;\FF_2)\otimes\FF_2[\theta^{-1},\theta]$.  Keeping
    track of quantum gradings, the summand of the spectral sequence in
    quantum grading $j$ converges to
    \begin{equation}
      \bigoplus_{\substack{\ol{\imath},\ol{\jmath},k\in\ZZ\\2\ol{\jmath}+k=j-1+3N_--6\ol{N}_-}}\hspace{-1em}H_*\Cone\big(
      \AKh_{\ol{\imath},\ol{\jmath},k-1}(\ol{K}_1;\FF_2)\stackrel{f^+}{\to}
      \AKh_{\ol{\imath},\ol{\jmath},k}(\ol{K}_0;\FF_2)
      \big)\otimes\FF_2[\theta^{-1},\theta]
    \end{equation}
    where $N_-$ (respectively $\ol{N}_-$) is the number of negative
    crossings of $K$ (respectively $\ol{K}$).
  \end{enumerate}
\end{theorem}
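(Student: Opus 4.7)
The plan is to adapt the Tate-type localization strategy that Stoffregen-Zhang~\cite{SZ-kh-localization} and Borodzik-Politarczyk-Silvero~\cite{BPS-kh-periodic} used for periodic knots. The feature new to the strongly invertible case is that the $\tau$-fixed set meets the knot in two points (the endpoints of the axial arc), so the $\tau$-fixed part of the Khovanov cube splits into two flavors according to the $0$- or $1$-resolution of the axial crossing; this is why the target of the spectral sequence is the mapping cone of the axis-moving map rather than a single annular complex. First I would promote $\KhCx(K;\FF_2)$ to a model on which the strong inversion acts as an honest chain map $\tau_{\#}$. Since $\tau$ permutes the non-axial crossings in pairs and fixes the axial crossing, the cube of resolutions carries a strict $\ZZ/2$-action on objects; with a $\tau$-equivariant choice of ladybug matching, or equivalently by invoking the Burnside / space-level refinement underlying the Khovanov stable homotopy type, the Khovanov differential becomes honestly $\tau$-equivariant. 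I then form the Tate complex
\[
  \bigl(\KhCx(K;\FF_2)\otimes\FF_2[\theta^{-1},\theta],\ d+\theta(\Id+\tau_{\#})\bigr),
\]
in which $\theta$ has homological degree $-1$ and trivial quantum grading; filtering by powers of $\theta$ yields a spectral sequence whose $E^1$-page is $\Kh(K;\FF_2)\otimes\FF_2[\theta^{-1},\theta]$ with $d^1=\theta(\Id+\tau_*)$, and since the differentials of the Tate complex preserve quantum grading and each raises $\theta$-power, this gives properties ($\Theta$-1) and~\ref{item:ss-gr}.

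The crux is identifying the $E^\infty$-page. A Smith-type localization argument shows that the Tate complex of a free $\FF_2[\ZZ/2]$-chain complex is acyclic, so the total homology agrees as an $\FF_2[\theta^{-1},\theta]$-module with $H_*(\KhCx(K;\FF_2)^{\tau_{\#}})\otimes\FF_2[\theta^{-1},\theta]$, the ordinary homology of the $\tau$-fixed subcomplex tensored with the Laurent ring. A vertex of the cube is $\tau$-fixed exactly when each $\tau$-pair of non-axial crossings is resolved identically; such a resolution is itself $\tau$-symmetric and, after quotienting by $\tau$, descends to an annular resolution of either $\ol{K}_0$ or $\ol{K}_1$ according to whether the axial crossing was $0$- or $1$-resolved. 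Standard bookkeeping identifies $\tau$-symmetric Khovanov labels over a fixed resolution with annular Khovanov labels over its quotient: a $\tau$-swapped pair of circles collapses to a single trivial (null-homotopic) quotient circle carrying both Frobenius labels, while a $\tau$-invariant circle threading the axis becomes an annular (essential) circle carrying only the $\tau$-invariant label. The cube edges between $\tau$-fixed vertices sharing an axial resolution recover the annular differentials of $\AKhCx(\ol{K}_0;\FF_2)$ and $\AKhCx(\ol{K}_1;\FF_2)$, while the cube edges that toggle the axial resolution recover the axis-moving map $f^+$ of~\Section{moving}. Thus the $\tau$-fixed subcomplex is chain isomorphic, up to the grading shift recorded in~\ref{item:main-thm-e-infty}, to $\Cone(f^+)=\AKhCx(\ol{K}_1,\ol{K}_0;\FF_2)$, which is the claimed convergent.

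Grading bookkeeping is then mechanical: each $\tau$-pair of non-axial crossings of $K$ descends to one crossing of $\ol{K}$, so $N_-(K)-2\ol{N}_-$ is a small correction contributed by the axial crossing together with the Khovanov sign convention, and the factor of $3$ in $3N_--6\ol{N}_-$ reflects that one unit of homological shift is three units of quantum shift under the Khovanov convention. I expect the main obstacle to be the chain-level equivariant identification: constructing a $\tau$-equivariant ladybug matching (or navigating its analogue in the Burnside model) and verifying that the cube edges between the two axial-resolution families assemble, on the nose, into the Akhmechet-Khovanov axis-moving map $f^+$ rather than some chain-homotopic perturbation. Once that is pinned down, both the spectral sequence and its convergence are formal consequences of the Tate filtration.
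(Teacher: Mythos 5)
Your setup of the Tate complex and its $\theta$-power filtration matches the paper exactly, and the observation that one needs a chain-level realization of $\tau$ (via a $\tau$-equivariant ladybug matching) is also right; Lemma~\ref{lem:the-action} in the paper does exactly this at the level of Burnside $2$-functors. The problem is the Smith-theory step and your description of the fixed-point complex; both have gaps that the Burnside/space-level machinery in the paper (Lemmas~\ref{lem:the-fixed-points}, Propositions~\ref{prop:equivariant-box-maps} and~\ref{prop:main-fixed-pts}) is precisely designed to close.

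First, the claim that the Tate homology equals $H_*\bigl(\KhCx(K;\FF_2)^{\tau}\bigr)\otimes\FF_2[\theta^{-1},\theta]$ is not a chain-level consequence of ``the Tate complex of a free $\FF_2[\ZZ/2]$-complex is acyclic.'' The literal $\tau$-invariant subcomplex $\KhCx(K;\FF_2)^\tau$ contains not just $\tau$-fixed generators but also all sums $y+\tau y$, and the quotient $\KhCx/\KhCx^\tau$ is not free over $\FF_2[\ZZ/2]$ (already $\FF_2[\ZZ/2]/(1+\tau)\cong\FF_2$ with trivial action). So the short exact sequence you'd need does not have a free cokernel, and the acyclicity argument does not apply. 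What one would actually want is a \emph{sub}complex spanned only by the $\tau$-fixed Khovanov generators --- but this is not a subcomplex: for a fixed generator $x$ at a fixed vertex $v$, the Khovanov differential $d(x)$ has components at non-fixed vertices (flipping one crossing of a $\tau$-pair leaves the fixed subcube). In the cellular chain complex of an honest $\ZZ/2$-CW complex this pathology does not occur, because the boundary of a fixed cell lands in the fixed subcomplex; that the Khovanov chain complex with the Khovanov generators as ``cells'' does \emph{not} behave this way is precisely why the paper has to pass through the Khovanov stable homotopy type. On the space-level model, the $\ZZ/2$-action on the box $B_x$ associated to a fixed generator $x$ is a nontrivial reflection, and the geometric fixed set is a lower-dimensional sub-box; the cellular chains of the geometric fixed-point CW complex are \emph{not} a subcomplex of the original cellular chains, and there is a homological degree shift that a naive chain-level argument would miss.

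Second, and related, your description ``the cube edges between $\tau$-fixed vertices sharing an axial resolution recover the annular differentials'' is incorrect: two $\tau$-fixed vertices differing in a $\tau$-pair of non-axial crossings differ in \emph{two} coordinates of $(1\to0)^N$ and are not joined by an edge. The fixed subcube $(1\to0)^{\ol N+1}$ embeds diagonally, via the map $\iota(v_1,\dots,v_{\ol N},v_{\ol N+1})=(v_1,v_1,\dots,v_{\ol N},v_{\ol N},v_{\ol N+1})$, and the map on the fixed complex corresponding to a non-axial annular crossing is the \emph{composite} of two Khovanov saddle correspondences through a non-fixed intermediate vertex. Making this composite well-defined, coherent across $2$-faces (including the ladybug faces), and compatible with the geometric fixed points requires the $2$-morphism data of the Burnside functor --- this is exactly the content of Lemma~\ref{lem:the-fixed-points} and Proposition~\ref{prop:equivariant-box-maps}. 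At the naive chain level, the ``composite through non-fixed vertices'' shows up as a $d^2$ (not $d^1$) differential in an auxiliary filtration of the Tate complex, and you would still need to control all higher differentials and extension problems; the paper avoids this by proving the geometric fixed-point identification (Proposition~\ref{prop:main-fixed-pts}) and then invoking classical localization for $\ZZ/2$-spectra in one stroke. So the Burnside/box-map construction is not, as your proposal suggests, merely a device for getting an equivariant differential; it is the mechanism that makes Smith theory applicable at all.
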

Since the quantity $3N_--6\ol{N}_-$ comes up frequently, let
\begin{equation}\label{eq:Delta}
\Delta=N_--2\ol{N}_-.
\end{equation}
See Remark~\ref{rem:gr-shift}
and~Proposition~\ref{prop:classical-invt} for a little further
discussion of the grading shift, and Section~\ref{sec:conventions} for
our grading conventions for Khovanov homology. Like the periodic knot
case, the proof of Theorem~\ref{thm:main} uses the Khovanov stable
homotopy type and Smith theory.

\begin{corollary}
  For any quantum grading $j$, we have
  \[
    \sum_i\dim \Kh_{i,j}(K;\FF_2) \geq
    \hspace{-2em}\sum_{\substack{\ol{\imath},\ol{\jmath},k\in\ZZ\\2\ol{\jmath}+k=j-1+3\Delta}}\hspace{-2em}\dim\AKh_{\ol{\imath},\ol{\jmath},k}(\ol{K}_1,\ol{K}_0;\FF_2).
    \]
\end{corollary}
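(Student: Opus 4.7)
The plan is to read off the inequality directly from the spectral sequence in Theorem~\ref{thm:main}, by tracking dimensions one quantum grading at a time and then restricting to a single $\theta$-slice. First I would fix a quantum grading $j$. By property~\ref{item:ss-gr}, each differential $d^r$ preserves the quantum grading, so the whole spectral sequence splits as a direct sum of spectral sequences, one for each $j$; all subsequent bookkeeping can be done in the $j$-summand alone.

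Next I would compare the total dimensions of the $E^1$- and $E^\infty$-pages in quantum grading $j$. The $E^1$-page in this grading is $\Kh_{*,j}(K;\FF_2)\otimes\FF_2[\theta^{-1},\theta]$, and the key observation is that the $d^1$-differential $\theta(\Id+\tau_*)$ commutes with multiplication by $\theta$; by induction on pages, every $d^r$ intertwines $\theta$-multiplication with itself (only the $\theta$-power index shifts, by $r$). Consequently each $E^r$-page is $\theta$-periodic, and in each fixed $\theta$-power the $E^1$-slice has dimension $\sum_i\dim\Kh_{i,j}(K;\FF_2)$. By part~\ref{item:main-thm-e-infty} of Theorem~\ref{thm:main}, the $E^\infty$-page in quantum grading $j$ is $\theta$-periodic with each $\theta$-slice of dimension
\[
\sum_{\substack{\ol{\imath},\ol{\jmath},k\in\ZZ\\2\ol{\jmath}+k=j-1+3\Delta}}\dim\AKh_{\ol{\imath},\ol{\jmath},k}(\ol{K}_1,\ol{K}_0;\FF_2),
\]
using $3\Delta=3N_--6\ol{N}_-$ from Equation~\eqref{eq:Delta}.

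Finally, since dimensions can only decrease as one passes from $E^r$ to $E^{r+1}$, the $E^\infty$-slice dimension is bounded above by the $E^1$-slice dimension in the same quantum grading and $\theta$-power, which is exactly the stated inequality. There is no real obstacle here: the argument is a direct dimension count, and the only point requiring care is that both pages are genuinely $\theta$-periodic, so that ``dimension of a single $\theta$-slice'' is well-defined and finite even though the ambient module $\FF_2[\theta^{-1},\theta]$ is not of finite total dimension. This $\theta$-periodicity is immediate from the form of $d^1$ and is preserved under taking homology page by page.
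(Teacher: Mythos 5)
Your proof is correct and is essentially the paper's argument; the paper just states it as a one-line comparison of the ranks of the $E^1$- and $E^\infty$-pages, while you spell out the $\theta$-periodicity that makes the per-slice dimension count legitimate.
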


\begin{proof}
  This follows from Theorem~\ref{thm:main} by comparing the ranks of the $E^1$-page and $E^\infty$-page.
\end{proof}

\begin{corollary}
  \label{cor:comp-tau-SS}
  Assume that in some quantum grading $j$, $\Kh_{*,j}(K;\FF_2)$ is
  supported in a single homological grading $i$. Then,
  \[
    \dim \Kh_{i,j}(K;\FF_2)-2\rank((\Id+\tau_*)_{i,j})=\hspace{-2em}\sum_{\substack{\ol{\imath},\ol{\jmath},k\in\ZZ\\2\ol{\jmath}+k=j-1+3\Delta}}\hspace{-2em}\dim\AKh_{\ol{\imath},\ol{\jmath},k}(\ol{K}_1,\ol{K}_0;\FF_2).
  \]
  where $(\Id+\tau_*)_{i,j}$ is the induced endomorphism on
  $\Kh_{i,j}(K;\FF_2)$.
\end{corollary}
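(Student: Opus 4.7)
The plan is to sharpen the $E^1$-to-$E^\infty$ comparison used in the previous corollary by computing the $E^2$-page in the quantum-grading-$j$ strip exactly and then arguing that $E^2 = E^\infty$ in that strip.

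Fix the quantum grading $j$. By hypothesis the $E^1$-page in quantum grading $j$ equals $\Kh_{i,j}(K;\FF_2)\otimes\FF_2[\theta^{-1},\theta]$, concentrated in Khovanov homological grading $i$. The $d^1$-differential $\theta(\Id+\tau_*)$ preserves the Khovanov homological grading (since $\tau_*$ does), so on each $\theta$-power slot it reduces to the endomorphism $\Id+\tau_*$ of $\Kh_{i,j}(K;\FF_2)$. Because $\tau_*$ is an involution and we work over $\FF_2$, $(\Id+\tau_*)^2 = 0$; hence the $E^2$-page at each $\theta$-power is $\ker(\Id+\tau_*)/\image(\Id+\tau_*)$, of dimension exactly $\dim\Kh_{i,j}(K;\FF_2)-2\rank((\Id+\tau_*)_{i,j})$.

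Next, I claim $d^r = 0$ on the quantum-grading-$j$ strip for every $r\ge 2$, so $E^2 = E^\infty$ there. Part~\ref{item:ss-gr} says $d^r$ preserves quantum grading and raises $\theta$-power by $r$; since the underlying filtered complex has total differential of degree $+1$ in the sum of Khovanov homological grading and $\theta$-power grading, $d^r$ must shift the Khovanov homological grading by $1-r$. For $r\ge 2$ this is a strict decrease, but the $E^2$-page in quantum grading $j$ remains supported in Khovanov homological grading $i$ (as $E^1$ was, and $d^1$ preserved that grading), so either the source or the target of $d^r$ must vanish on this strip; hence $d^r = 0$ there and $E^2 = E^\infty$.

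Finally, part~\ref{item:main-thm-e-infty} identifies the $\FF_2$-rank of $E^\infty$ in quantum grading $j$ at each $\theta$-power with
\[
  \sum_{\substack{\ol{\imath},\ol{\jmath},k\in\ZZ\\ 2\ol{\jmath}+k = j-1+3\Delta}}\dim\AKh_{\ol{\imath},\ol{\jmath},k}(\ol{K}_1,\ol{K}_0;\FF_2),
\]
and equating this with the $E^2$-rank computed above yields the identity. The main subtlety is pinning down the homological-grading shift of $d^r$, which is not stated explicitly in part~\ref{item:ss-gr}: one uses the convention that $\theta$ carries Khovanov homological degree $+1$ in the Borel/Tate construction underlying the spectral sequence, so that the total differential indeed has total degree $+1$ and the homological shift of $d^r$ is $1-r$ as claimed.
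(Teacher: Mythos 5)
Your proof is correct and follows the same approach as the paper: compute $E^2$ explicitly in the quantum-grading-$j$ strip (where the $\ker/\image$ dimension count gives $\dim - 2\rank$ since $(\Id+\tau_*)^2 = 0$ over $\FF_2$), argue $d^r = 0$ for $r \geq 2$ by tracking the Khovanov homological grading of $d^r$, and equate with the $E^\infty$-page from part~\Item{main-thm-e-infty}. One small correction to your last paragraph: in the paper's construction (see \Equation{tate} and the sentence following it), $\theta$ is given homological degree $-1$, not $+1$, and the total Tate differential \emph{decreases} total homological grading by $1$ (consistent with the convention from \Section{conventions} that the Khovanov differential decreases homological grading); consequently $d^r$ shifts the Khovanov homological grading by $r-1$ rather than $1-r$. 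This sign discrepancy does not affect your argument, since all you use is that the shift is nonzero for $r\ge 2$, but it is worth having the conventions internally consistent with the rest of the paper.
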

\begin{proof}
  This also follows from Theorem~\ref{thm:main} by equating the
  ranks of the $E^2$-page and $E^\infty$-page: since $\theta$ has
  homological grading $(-1,0)$, statement~\ref{item:ss-gr} in the
  theorem implies that the $d^r$-differentials vanish for $r>1$.
\end{proof}

\begin{corollary}
  \label{cor:comp-tau}
  Assume in some quantum grading $j$, $\Kh_{*,j}(K;\FF_2)$ is
  supported in a single homological grading $i$.  Suppose also that
  $\bigoplus_{\ol{\imath},\ol{\jmath},k\mid2\ol{\jmath}+k=j-1+3\Delta}
  \AKh_{\ol{\imath},\ol{\jmath},k}(\ol{K}_1,\ol{K}_0;\FF_2)=0$. Then,
  the endomorphism $(\Id+\tau_*)_{i,j}$ on $\Kh_{i,j}(K;\FF_2)$ has
  rank $\tfrac{1}{2}\dim\Kh_{i,j}(K;\FF_2)$. In particular,
  if $\dim\Kh_{i,j}(K;\FF_2)=2$ then, up to a change of basis,
  $\tau_*$ is given by the matrix $ \left(\begin{smallmatrix}
      0 & 1\\
      1 & 0
  \end{smallmatrix}\right).
  $
\end{corollary}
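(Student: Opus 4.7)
The plan is to derive the rank statement directly from Corollary~\ref{cor:comp-tau-SS} and then perform an elementary linear-algebra calculation over $\FF_2$ to determine the matrix form.

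First I would invoke Corollary~\ref{cor:comp-tau-SS}: by hypothesis $\Kh_{*,j}(K;\FF_2)$ is supported in a single homological grading, so that corollary applies. The vanishing assumption on the annular Khovanov homology of the pair $(\ol{K}_1,\ol{K}_0)$ in the relevant quantum range makes the right-hand sum equal to zero, which immediately gives
\[
\dim\Kh_{i,j}(K;\FF_2) = 2\rank\bigl((\Id+\tau_*)_{i,j}\bigr),
\]
proving the rank claim.

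For the $2$-dimensional case, the key observation is that $\tau$ is an involution of $K$, so the induced map satisfies $\tau_*^2 = \Id$ on $\Kh(K;\FF_2)$. Working over $\FF_2$, this gives
\[
(\Id+\tau_*)^2 = \Id + 2\tau_* + \tau_*^2 = \Id + \tau_*^2 = 0,
\]
so $\image(\Id+\tau_*)_{i,j} \subseteq \ker(\Id+\tau_*)_{i,j}$. When $\dim\Kh_{i,j}(K;\FF_2)=2$ and the rank is $1$, image and kernel coincide as $1$-dimensional subspaces. I would then choose any $v \in \Kh_{i,j}(K;\FF_2)$ with $v\notin \ker(\Id+\tau_*)_{i,j}$, so that $v+\tau_*(v)\neq 0$; this forces $v$ and $\tau_*(v)$ to be linearly independent and hence to form a basis. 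In this basis $\tau_*$ manifestly swaps the two vectors, giving the matrix $\left(\begin{smallmatrix} 0 & 1\\ 1 & 0\end{smallmatrix}\right)$.

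There is no real obstacle here; the only point requiring any care is verifying that $\{v,\tau_*(v)\}$ is a basis, which follows from $v\neq 0$ together with $v+\tau_*(v)\neq 0$ (both immediate from the choice of $v$ outside the kernel). The content of the corollary lies entirely in Corollary~\ref{cor:comp-tau-SS} and ultimately in Theorem~\ref{thm:main}; this statement is a clean linear-algebra consequence.
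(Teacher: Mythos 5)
Your proposal is correct and takes essentially the same approach as the paper: the rank claim is read off from Corollary~\ref{cor:comp-tau-SS}, and the matrix claim is the observation that a non-identity involution of $\FF_2^2$ is conjugate to the swap. The paper derives the latter by enumerating all four involutions of $\FF_2^2$ and noting the three non-identity ones are conjugate, while you construct the eigenbasis-free basis $\{v,\tau_*(v)\}$ directly; these are interchangeable one-line arguments.
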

\begin{proof}
  The first statement is immediate from
  Corollary~\ref{cor:comp-tau-SS}.  The second follows from the first
  and the fact that any involution of $\FF_2^2$ is one of
  $\left(\begin{smallmatrix}
      1 & 0\\
      0 & 1
  \end{smallmatrix}\right)$, $\left(\begin{smallmatrix}
      0 & 1\\
      1 & 0
  \end{smallmatrix}\right)$, $\left(\begin{smallmatrix}
      1 & 1\\
      0 & 1
  \end{smallmatrix}\right)$, or $\left(\begin{smallmatrix}
      1 & 0\\
      1 & 1
  \end{smallmatrix}\right)$, and the last two are conjugate to the second one.
\end{proof}

As noted above, one reason strongly invertible knots have appeared
recently is that they have furnished examples of non-isotopic pairs
of slice disks. It turns out that Corollary~\ref{cor:comp-tau} and
properties of the maps on Khovanov homology can be used to prove that certain pairs of slice
disks are distinguished by Khovanov homology, without explicitly
computing the maps associated to the slice disks. We illustrate this
phenomenon for the knot $9_{46}$ in Section~\ref{sec:slice}.

Another spectral sequence was constructed by
Lobb-Watson~\cite{LW-kh-invert}, although they used a different kind
of diagrams, transvergent rather than intravergent. (They also mention
intravergent diagrams briefly, in the discussion around their Figure
7.) It might be interesting to compare their $\mathcal{F}$ spectral
sequence with the one constructed here; in particular, this might give
an approach to proving that their other, $\mathcal{G}$, spectral
sequence collapses~\cite[Question 6.5]{LW-kh-invert}.

The reduced Khovanov homology of $K$ is closely related to the Heegaard Floer
homology of the branched double cover $\Sigma(S^3,K)$ of
$K$~\cite{OSz-hf-branched,Roberts-kh-dcov,GW-kh-sutured}, and
Theorem~\ref{thm:main} has an analogue for Heegaard Floer homology:
\begin{theorem}\label{thm:dcov}
  Given a strongly invertible knot $K$ with quotient knot
  $\ol{K}$ there is an ungraded spectral sequence with $E^1$-page
  given by $\HFa(\Sigma(S^3,K))\otimes\FF_2[\theta^{-1},\theta]]$
  converging to
  $\HFa(\Sigma(S^3,\ol{K}))\otimes\FF_2[\theta^{-1},\theta]]$.
\end{theorem}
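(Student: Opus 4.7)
The approach mirrors the Khovanov case. First, the strong inversion $\tau$ lifts through the branched cover $\Sigma(S^3,K)\to S^3$ to an involution $\tilde\tau$, well-defined up to composition with the deck transformation $\sigma$. The fixed set of $\tilde\tau$ is the preimage of the axis $A$, which is a single circle (the double cover of $A$ branched at the two points of $A\cap K$).

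The next step is to identify $\Sigma(S^3,K)/\tilde\tau\cong\Sigma(S^3,\ol{K})$. The map $\Sigma(S^3,K)/\tilde\tau\to S^3/\tau\cong S^3$ is a double cover branched over the image of $K$ under $\Sigma(S^3,K)\to S^3/\tau$, which is precisely the quotient knot $\ol{K}$; the two choices of lift $\tilde\tau$ vs.\ $\sigma\tilde\tau$ correspond to the two natural quotient knots in $S^3$. So the intermediate quotient is the branched double cover $\Sigma(S^3,\ol{K})$.

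With this identification in hand, the theorem follows by applying the Heegaard Floer Smith-type localization spectral sequence for an involution on a closed $3$-manifold, developed in \cite{HLS-flexible} (see also \cite{Large-hf-loc,HLL-hf-dcov}). For a closed $3$-manifold $Y$ with involution $\iota$ whose quotient is a $3$-manifold $Y'$, that construction produces an ungraded spectral sequence with $E^1$-page $\HFa(Y)\otimes\FF_2[\theta^{-1},\theta]]$ converging to $\HFa(Y')\otimes\FF_2[\theta^{-1},\theta]]$. Applied to $\tilde\tau$ on $\Sigma(S^3,K)$, this yields the desired spectral sequence. The ungradedness reflects the fact that the lifted involution need not preserve any $\SpinC$ structure.

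The main technical obstacle is verifying the hypotheses of the equivariant Floer machinery for our involution: concretely, one must produce a $\tilde\tau$-equivariant Heegaard diagram for $\Sigma(S^3,K)$ (or, in the flexible framework, equivariant almost complex structures and perturbation data on the symmetric product). One can construct such a diagram by lifting a $\tau$-equivariant bridge decomposition of $(S^3,K)$, which exists because $K$ admits an intravergent diagram; the remaining transversality issues are handled by the flexibility built into \cite{HLS-flexible}.
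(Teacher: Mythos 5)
Your overall strategy matches the paper's: lift the strong inversion to an involution of $\Sigma(S^3,K)$, identify the quotient with $\Sigma(S^3,\ol{K})$, and then invoke a Floer-theoretic localization theorem. The geometric identification you sketch is essentially correct and is what the paper does (via~\cite[Lemma 3.1]{AB-top-invert-AS}): the paper shows, by cutting along an equivariant Seifert surface, that $\Sigma(S^3,K)$ is itself a branched double cover, $\Sigma(S^3,K)=\Sigma\bigl(\Sigma(S^3,\ol{K}),K'\bigr)$ where $K'$ is the preimage of one arc of the axis.

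However, there is a genuine gap in the final step. You invoke a general localization theorem ``for a closed $3$-manifold $Y$ with involution $\iota$ whose quotient is a $3$-manifold $Y'$,'' yielding a spectral sequence from $\HFa(Y)\otimes\FF_2[\theta^{-1},\theta]]$ to $\HFa(Y')\otimes\FF_2[\theta^{-1},\theta]]$. No such theorem exists at this level of generality, and in fact the statement is false for free involutions: take $Y=S^3$, $Y'=\RP^3$; then $\dim\HFa(Y)=1<2=\dim\HFa(Y')$, so no spectral sequence of this form can run. The correct input is specifically about double \emph{branched} covers; the paper uses~\cite[Theorem 1.1]{HLL-hf-dcov}, which produces a spectral sequence from $\HFa(\Sigma(Y,L),\pi^*\spinc)$ to $\bigoplus_{\spinc'}\HFa(Y,\spinc')$ for a nullhomologous knot $L\subset Y$. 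This is why the identification $\Sigma(S^3,K)=\Sigma(\Sigma(S^3,\ol{K}),K')$ --- not merely $\Sigma(S^3,K)/\tilde\tau\cong\Sigma(S^3,\ol{K})$ --- is essential: it exhibits your involution as the deck transformation of a branched double cover over a knot in $\Sigma(S^3,\ol{K})$, putting you in the hypotheses of the theorem. You should also note that summing~\cite[Theorem 1.1]{HLL-hf-dcov} over all $\SpinC$-structures requires $H^2(\Sigma(S^3,\ol{K}))$ to have no $2$-torsion; this holds because its order is $\det(\ol{K})$, which is odd. Your last paragraph about constructing equivariant Heegaard diagrams is true but unnecessary --- that work is already packaged inside the cited theorem.
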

As we will see, this follows from a localization result of Hendricks,
Lidman, and the first author for the Heegaard Floer homology of double
branched covers~\cite{HLL-hf-dcov}, which in turn follows from a
general localization theorem in Lagrangian intersection Floer homology
of Large~\cite{Large-hf-loc}. Note that in Theorem~\ref{thm:dcov},
there are two choices for the quotient knot $\ol{K}$, depending on
which half of the axis one chooses. The statement holds for either
choice. There is also an analogue for the knot Floer homology relative
to a preimage of the axis, Theorem~\ref{thm:HFK-dcov}.

We expect that the spectral sequence in Theorem~\ref{thm:main} is an
invariant of the strong inversion on $K$, but do not pursue this
here. (Invariance of the spectral sequence from
Theorem~\ref{thm:dcov} follows from
\cite[Remark~4.13]{HLL-hf-dcov}.) Theorem~\ref{thm:dcov} suggests
there might be an interesting reduced version of
Theorem~\ref{thm:main}, but we do not pursue that either. One could
also consider links in Theorems~\ref{thm:main} and~\ref{thm:dcov},
intersecting the axis in more than two points, but we also do not
pursue that generalization.

This paper is organized as follows. Since there are many conventions
for Khovanov homology, we review ours in
Section~\ref{sec:conventions}. We then introduce the axis-moving maps
on annular Khovanov homology and some of their basic properties, in
Section~\ref{sec:moving}. Section~\ref{sec:proof} proves the main
localization result, Theorem~\ref{thm:main}. We
give the application to slice disks in Section~\ref{sec:slice}. We end
with the proof of the analogue for Heegaard Floer homology,
Theorem~\ref{thm:dcov}, in Section~\ref{sec:HF}.

\subsection*{Acknowledgments.} We thank Champ Davis for helpful conversations
and computer code and Keegan Boyle, Mikhail Khovanov, Tye Lidman, Andrew Lobb, Matthew
Stoffregen, and Liam Watson for further helpful discussions.

\section{Conventions for Khovanov homology}\label{sec:conventions}

We start by describing our grading conventions for Khovanov homology
(which also serves as a terse review of Khovanov homology
itself). Given a knot diagram with $N$ crossings numbered $1$ through
$N$, consider the Kauffman cube of resolutions, where the complete
resolution of the diagram at the vertex
$v=(v_1,\dots,v_N)\in\{0,1\}^N$ is obtained by resolving the $i\th$
crossing
$\vcenter{\hbox{\begin{tikzpicture}[scale=0.4]
      \draw[knot](0,0)--(1,1);\draw[knot](1,0)--(0,1);\end{tikzpicture}}}$
by the $0$-resolution
$\vcenter{\hbox{\begin{tikzpicture}[scale=0.4]
      \draw[resol](0,0)to[out=45,in=135](1,0);\draw[resol](1,1)to[out=-135,in=-45](0,1);\end{tikzpicture}}}$
if $v_i=0$ or by the $1$-resolution
$\vcenter{\hbox{\begin{tikzpicture}[scale=0.4]
      \draw[resol](0,0)to[out=45,in=-45](0,1);\draw[resol](1,1)to[out=-135,in=135](1,0);\end{tikzpicture}}}$
if $v_i=1$, for each $i\in\{1,\dots,N\}$.  We will view the cube as
$(1\to 0)^N$, so each edge runs from a vector with a $1$ in
some coordinate to the corresponding vector with a $0$ in that
coordinate. Associated to each edge is an elementary saddle cobordism
between the corresponding resolutions.

The Frobenius algebra $\ZZ[X]/(X^2)$ (with comultiplication given by
$1\mapsto1\otimes X+X\otimes 1,X\mapsto X\otimes X$) corresponds to a
$2$-dimensional TQFT.  The Khovanov chain complex $\KhCx$ is obtained
by applying this TQFT to the cube of resolutions, and then taking the
total complex.  More concretely, $\KhCx$ is freely generated by the
Khovanov generators $x$, which consist of a choice of a vertex $v\in\{0,1\}^N$
and a labeling of the circles in the complete resolution at $v$
by the labels $\{1,X\}$. The homological grading of $x$ is
$|v|-N_-$, and the quantum grading is
$N-3N_-+|v|+\#\{\text{circles labeled }X\}-\#\{\text{circles labeled
}1\}$. (Here $N_-$ is the number of negative crossings in the diagram
and $|v|=\sum_i v_i$ is the $L^1$-norm of $v$.) So, the quantum
grading of $X$ is two more than the quantum grading of $1$.

The differential on the Khovanov chain complex is a sum of maps along
the edges of the cube; it preserves the quantum grading and decreases
the homological grading by $1$. The component of the differential
along the edge from the vertex
$v=(v_1,\dots,v_{n-1},1,v_{n+1},\dots,v_N)$ to the vertex
$w=(v_1,\dots,v_{n-1},0,v_{n+1},\dots,v_N)$ is
$(-1)^{v_1+\dots+v_{n-1}}$ times the map associated by the TQFT to the
saddle cobordism. 
That is, if the saddle cobordism merges two circles
into one, then the map is induced by the multiplication map in
$\ZZ[X]/(X^2)$, and if the saddle cobordism splits a circle into
two, then the map is induced by the comultiplication map in $\ZZ[X]/(X^2)$.

This convention differs from Khovanov's
original~\cite{Kho-kh-categorification} in a couple of ways: the
differential in Khovanov's original paper increased the homological
grading; and the quantum grading of $X$ was lower than the quantum
grading of $1$. However, in order to ensure that arc algebras are
supported in non-negative quantum gradings, Khovanov switched the
latter convention in~\cite{Kho-kh-tangles}, and his subsequent papers
follow the switched convention (where the quantum grading of $X$ is
higher than the quantum grading of $1$). However, Khovanov's original
quantum grading convention had a desirable feature that the positive
knots (except the unknot) had Khovanov homologies supported in positive
quantum gradings; unfortunately, with the switched convention, their
Khovanov homologies were supported in negative quantum gradings. With our
grading conventions---additionally making the differential decrease
the homological grading---we try to tread a middle ground:
arc algebras and Khovanov homologies of (non-trivial)
positive knots are both supported in non-negative quantum
gradings.

The Khovanov chain complex with our convention is the dual of
the Khovanov chain complex from Khovanov's original convention,
preserving the bigrading (this follows easily from the duality
statement~\cite[Proposition~32]{Kho-kh-categorification}). So, over
any field, Khovanov homology with our convention is (unnaturally)
bigraded isomorphic to the original Khovanov homology; over $\ZZ$, the
free parts are bigraded isomorphic, and the torsion subgroup with our
convention is isomorphic to the original torsion subgroup, but with
its homological grading shifted down by $1$ (and quantum grading
unchanged).

The Khovanov complex of a link in the annulus inherits an extra
\emph{annular} or \emph{winding number} filtration; the homology of
the associated graded complex is annular Khovanov homology. We follow
the usual conventions in the literature for the annular
filtration. Specifically, given a labeled resolution, orient circles
labeled $1$ counter-clockwise (positively) and circles labeled $X$
clockwise (negatively). Then, the annular filtration of a labeled
resolution is the winding number around the axis. Terms in the
differential either preserve the annular filtration or decrease it by
$2$. (The latter occurs when merging a nullhomotopic circle labeled
$X$ with an essential circle labeled $1$, merging two essential
circles labeled $1$, splitting an essential circle labeled $1$ into a
nullhomotopic circle labeled $1$ and an essential circle labeled $X$,
or splitting a nullhomotopic circle labeled $X$ into two essential
circles labeled $X$.)

\section{A map on annular Khovanov homology}\label{sec:moving}
Let $L$ be an annular link diagram. Fix a point $p$ on $L$ adjacent to
the axis of the annulus. Isotoping $p$ across the axis of the annulus
gives a new link $L'$. (See Figure~\ref{fig:axis-moving}.) In this
section we define and spell out basic properties of the axis moving
maps
\[
f^+,\ f^-\co \AKh(L)\to \AKh(L').
\]
While the rest of the results in this paper use $\FF_2$-coefficients,
in this section, we will work with $\ZZ$-coefficients.

Let $L\amalg U$ be the result of adding an essential circle $U$ around
the axis, adjacent to the axis and disjoint from $L$. (See
Figure~\ref{fig:axis-moving}.) The annular Khovanov complex of
$L\amalg U$ is
$\AKhCx(L)\otimes \AKhCx(U)=\AKhCx(L)\otimes\ZZ\langle
1,X\rangle$. So, there are two inclusions
$\iota_1,\iota_X\co \AKhCx(L)\into \AKhCx(L\amalg U)$, defined by
$\iota_1(y)=y\otimes 1$ and $\iota_X(y)=y\otimes X$; these have
(homological, quantum, annular) trigradings $(0,-1,1)$ and $(0,1,-1)$,
respectively. Merging $L$ and $U$ at the point $p$ gives a map
$m\co \AKhCx(L\amalg U)\to L$; this map has trigrading
$(0,1,0)$. (This is the annular merge map. So, for instance, merging
two essential circles labeled $1$ is the zero map.) By composing, we get trigrading-preserving maps 
\begin{align*}
  f^+&=m\circ \iota_1 \co \Sigma^{0,0,1}\AKhCx(L)\to \AKhCx(L') \\
  f^-&=m\circ \iota_X \co \Sigma^{0,2,-1}\AKhCx(L)\to \AKhCx(L'),
\end{align*}
where $\Sigma^{a,b,c}$ denotes a trigrading shift by $(a,b,c)$.  The maps
$f^\pm$ are compositions of chain maps, hence are chain maps. Abusing
notation, $f^\pm$ also denote the induced map on annular homology.

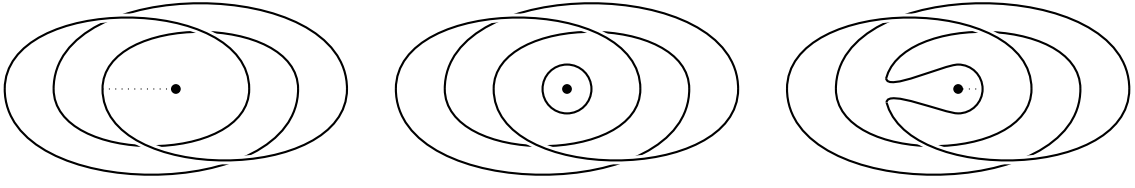
\begin{figure}
  \centering
  \begin{tikzpicture}[scale=.65]
    \foreach\i in {0,1,2}{
      \begin{scope}[xshift=8*\i cm]
    
        \coordinate (l1) at (0,0);
        \coordinate (l2) at (1,0);
        \coordinate (l3) at (2,0);
        \coordinate (l4) at (3,0);
        \coordinate (r0) at (4,0);
        \coordinate (r1) at (5,0);
        \coordinate (r2) at (6,0);
        \coordinate (r3) at (7,0);
        \coordinate (t) at (3.5,0.5);
        \coordinate (b) at (3.5,-0.5);
        \coordinate (axis) at (3.5,0);

        \draw[knot] (l2) to[out=90,in=90] (r3);
        \draw[knot] (l3) to[out=90,in=90] coordinate[pos=0.05] (l3t) (r2);
        \draw[knot] (l1) to[out=90,in=90] (r1);
        \draw[knot] (r1) to[out=-90,in=-90] (l2);
        \draw[knot] (r2) to[out=-90,in=-90] (l1);
        \draw[knot] (r3) to[out=-90,in=-90] coordinate[pos=0.95] (l3b) (l3);

        \fill[black] (axis) circle (0.1); 

        \ifnum\i=1
        \draw[knot] (l4) to[out=90,in=180] (t) to[out=0,in=90] (r0) to[out=-90,in=0] (b) to[out=180,in=-90] (l4);
        \fi

        \ifnum\i=0
        \draw[dotted](l3) to (axis);
        \fi

        \ifnum\i=2
        \draw[dotted](r0) to (axis);
        \draw[white,line width=10] (l3b)--(l3t);
        \draw[knot] (l3t) to[out=-90,in=180,looseness=0.5] (t) to[out=0,in=90] (r0) to[out=-90,in=0] (b) to[out=180,in=90,looseness=0.5] (l3b);
        \fi
        
      \end{scope}
    }
  \end{tikzpicture}  
  \caption{\textbf{The axis moving maps.} Left: the link $L$ and the
    arc connecting a point $p$ on $L$ to the axis (dotted). Center:
    the link $L\amalg U$. Right: the link $L'$ and the corresponding
    arc connecting $p'$ to the axis.}
  \label{fig:axis-moving}
\end{figure}

When we want to indicate the dependence of $f^\pm$ on the point $p$ we
will write them as $f^{\pm,p}$. Also, the point $p$ on $L$ becomes a
point $p'$ on $L'$ after the isotopy across the axis.

We give some basic properties of the maps $f^\pm$; though we will not
use them in the rest of this paper, perhaps they will be useful in
related applications.
\begin{proposition}
  Up to homotopy and sign, the maps $f^\pm$ are isotopy invariants of
  the pair of the annular link $L$ and the arc connecting $p$ to the
  axis.  Further,
  \begin{align}
   \label{eq:pp} f^+_{p'}\circ f^+_p &=f^-_{p'}\circ f^-_p=0 \\ 
    \label{eq:pn} f^+_{p'}\circ f^-_p&= f^-_{p'}\circ f^+_p= X\cdot_p.
  \end{align}
\end{proposition}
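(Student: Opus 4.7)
The plan is to establish invariance via the functoriality of the annular Khovanov TQFT, and then to verify the composition identities by a direct case analysis on the component of the resolution containing $p$.

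For invariance, note that $\iota_1$ (respectively $\iota_X$) is the chain map of the birth cobordism of a small essential disk around the axis decorated by $1$ (resp.\ $X$), and $m$ is the chain map of the saddle cobordism that merges $U$ with $L$ at $p$. Both elementary cobordisms can be taken to lie in an arbitrarily small neighborhood of the arc from $p$ to the axis, and together form a decorated cobordism from $L$ to $L'$ that is determined up to ambient isotopy by the pair consisting of the annular link $L$ and this arc. Standard functoriality of the Khovanov TQFT then gives invariance of $f^\pm$ up to chain homotopy and sign.

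For the composition identities, fix an annular Khovanov generator $y$ of $L$ and observe that each $f^\pm$ only affects the component $C$ of the resolution containing $p$; thus it suffices to classify $C$ by its isotopy type (nullhomotopic or essential) and its label ($1$ or $X$). Using the annular merge rules recalled in Section~\ref{sec:conventions}, one checks that for both $f^+_{p'}\circ f^+_p$ and $f^-_{p'}\circ f^-_p$, in each of the four types of $C$ the computation encounters either a Frobenius product $X\cdot X=0$, a merge of two essentials both labeled $1$, or a merge of a nullhomotopic-$X$ circle with an essential-$1$ circle---each of which vanishes in $\AKhCx$. For $f^+_{p'}\circ f^-_p$, when $C$ is nullhomotopic labeled $1$, $f^-_p$ merges $C$ with the essential-$X$ circle $U_1$ to produce an essential-$X$ component, and then $f^+_{p'}$ merges this with the essential-$1$ circle $U_2$ to produce a nullhomotopic circle labeled $X$, which is precisely $X\cdot_p(y)$; in the three remaining types of $C$, both $f^+_{p'}\circ f^-_p$ and $X\cdot_p$ vanish in $\AKhCx$ for matching reasons ($X^2=0$ when $C$ is labeled $X$, and an annular-filtration-decreasing move when $C$ is essential). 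The case $f^-_{p'}\circ f^+_p$ is symmetric. The main subtlety is in tracking the annular filtration shifts correctly across the cases, but once the TQFT conventions are fixed this reduces to routine bookkeeping.
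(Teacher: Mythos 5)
Your verification of the composition identities by case analysis on the component containing $p$ is correct, and in fact more detailed than what the paper offers (the paper simply states they are ``clear from the definitions''). The more serious issue is with the invariance argument.

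You describe $f^\pm$ as the chain map of a decorated cobordism from $L$ to $L'$ obtained by birthing an essential circle near the axis and then saddling it onto $L$, and you conclude by invoking ``standard functoriality of the Khovanov TQFT.'' But this cobordism is \emph{not} a cobordism in the annular (sutured) category: an essential circle in the thickened annulus does not bound a disk disjoint from the axis, so the birth cap must intersect the axis, and the total cobordism (which is the trace of the isotopy moving $p$ across the axis) also intersects it. Standard functoriality of annular Khovanov homology only applies to cobordisms in $A\times I\times[0,1]$ that avoid the axis; extending functoriality to cobordisms that touch the axis is precisely what Akhmechet and Khovanov's theory of anchored cobordisms accomplishes, and it is not automatic. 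Indeed the paper deliberately avoids this machinery in the proof and only records the anchored-cobordism interpretation of $f^\pm$ in the remark afterwards. The paper's own argument circumvents the problem: it first shows (by shrinking the arc) that an isotopy of the pair $(L,\gamma)$ can be replaced by one in which the arc stays short throughout, hence projects to a sequence of annular Reidemeister moves disjoint from the arc, and then directly verifies that the Reidemeister chain maps commute with $f^\pm$ at the chain level. You would need either to supply an analogous Reidemeister-move argument, or to explicitly invoke and justify the anchored-cobordism functoriality you are implicitly using, to close this gap.
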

Here, $X\cdot_p$ is the basepoint action on annular Khovanov homology, the
result of merging in a nullhomotopic circle labeled $X$ at the point $p$, via
the annular merge map. Also, the first statement uses naturality of annular
Khovanov homology (see, e.g.,~\cite{GW-kh-Schur}).
\begin{proof}
  For the first statement, call an arc from an annular link to the
  axis \emph{short} if the projection to the annulus is a smooth
  embedding and disjoint from the projection of the rest of the link
  to the annulus (i.e., the rest of the link diagram).  Given a pair
  of an annular link $L$ and a (not short) arc $\gamma$ connecting
  that link to the axis, there is a canonical isotopy from
  $L\cup\gamma$ to a pair $L'\cup\gamma'$ where $\gamma'$ is short:
  just shrink $\gamma$ and pull $L$ along with it.

  Now, fix link diagrams $L_i$, $i=0,1$, and points $p_i$ on $L_i$
  adjacent to the axis, and let $\gamma_i$ be the short arc from $p_i$
  to the axis. Assume that $L_0\cup\gamma_0$ is isotopic to
  $L_1\cup\gamma_1$, via an isotopy $L_t\cup\gamma_t$. Applying the
  canonical isotopy $L_{t,s}\cup\gamma_{t,s}$ from the previous
  paragraph to each $L_t$ gives a new isotopy
  $L_{t,1}\cup\gamma_{t,1}$ from $L_0\cup\gamma_0$ to
  $L_1\cup\gamma_1$ so that the arc $\gamma_{t,1}$ is short for all
  $t$. Perturb this isotopy so that the projection to the annulus is
  generic. This gives a sequence of Reidemeister moves (in the
  annulus) connecting $L_0$ to $L_1$ and disjoint from the arc. Each
  Reidemeister moves induces a map on the annular Khovanov complex,
  and it is immediate from the definitions that these maps commute
  with the maps $f^\pm$.
  
  Equations~\eqref{eq:pp} and~\eqref{eq:pn} are clear
  from the definitions.
\end{proof}

\begin{remark}
  These maps are a special case of maps associated to \emph{anchored
    cobordisms} by
  Akhmechet-Khovanov~\cite{AK-kh-anchored}. Specifically, the trace of
  an isotopy moving $p$ across the axis is an anchored cobordism. To
  define a map, we must also label the intersection point between this
  cobordism and the axis (the point where $p$ crosses the axis) by $1$
  or $2$. If the winding number of $K$ around the axis is odd then
  $f^+$ corresponds to labeling the intersection point $1$ and $f^-$
  corresponds to labeling it $2$; if the winding number is even then
  $f^+$ corresponds to labeling the intersection point $2$ and $f^-$
  corresponds to labeling it $1$ (compare~\cite[Proof of Theorem
  2.19]{AK-kh-anchored}).
    
  Equations~\eqref{eq:pp} and~\eqref{eq:pn} follow from Akhmechet-Khovanov's curtain
  relation~\cite[Corollary 2.7]{AK-kh-anchored}.
\end{remark}

\begin{remark}\label{rem:kinda-annular}
  Given $(L,p)$ as above, let $L''$ be the result of performing a
  Reidemeister I move at $p$ across the axis, changing the winding
  number by $\pm 1$ (i.e., a Markov 2 move). Choose the Reidemeister
  move so that the $1$-resolution of the new crossing is the disjoint
  union of $L$ and a new (essential) circle. Then, up to some overall
  grading shift, the mapping cone of
  $f^+\co \Sigma^{0,0,1}\AKhCx(L)\to \AKhCx(L')$ (respectively
  $f^-\co \Sigma^{0,2,-1}\AKhCx(L)\to \AKhCx(L')$) is a subcomplex of
  $\AKhCx(L'')$: it is the subcomplex where either the new crossing is
  $0$-resolved or the new crossing is $1$-resolved and the new
  essential circle is labeled $1$ (respectively $X$).
\end{remark}

\section{The localization theorem for strongly invertible knots}\label{sec:proof}

In this section, we prove Theorem~\ref{thm:main}. The spectral
sequence is constructed as follows. Given an intravergent diagram for
$K$, rotation $\tau$ by $180^\circ$ induces a $\ZZ/2$ action, which we
still call $\tau$, on $\KhCx(K;\FF_2)$. Consider the Tate complex for this
$\ZZ/2$-action, which is given by
$\KhCx(K;\FF_2)\otimes\FF_2[\theta^{-1},\theta]$ with differential
$d+\theta(\Id+\tau)$, where $d$ is the Khovanov differential:
\begin{equation}\label{eq:tate}
  \vcenter{\hbox{
  \begin{tikzpicture}[xscale=3.5]
    \foreach\i in {-1,0,1}{
      \node (t\i) at (\i,0) {$\theta^{\i}\KhCx(K;\FF_2)$};
      \draw[->] (t\i) to[out=120,in=60,looseness=10] node[midway,anchor=south] {\small $d$} (t\i);
    }
    \node (t-2) at (-1.8,0) {$\cdots$};
    \node (t2) at (1.8,0) {$\cdots$.};

    \foreach \i [count=\j from -1] in {-2,-1,0,1}{
      \draw[->] (t\i)--(t\j) node[midway,anchor=south] {\small $\theta(\Id+\tau)$};
    }
  \end{tikzpicture}}}
\end{equation}
If we give $\theta$ the (homological, quantum) bigrading $(-1,0)$ then
this complex decomposes according to quantum gradings and the
differential decreases the homological grading by $1$. The complex has
a filtration by the $\theta$-power. This filtration induces the
spectral sequence in Theorem~\ref{thm:main}.

The main work is to compute the $E^\infty$-page of the spectral
sequence (Item~\ref{item:main-thm-e-infty}); the other properties are immediate. The
strategy is similar to Stoffregen-Zhang's~\cite{SZ-kh-localization} and
Borodzik-Politarczyk-Silvero's~\cite{BPS-kh-periodic}: we prove that the
fixed points of the $\ZZ/2$-action induced by the strong inversion on
a CW complex representing
the Khovanov stable homotopy type of $K$ is related to the annular
Khovanov stable homotopy type of $(\ol{K}_1,\ol{K}_0)$, and then apply
classical Smith theory.

Before embarking on the proof, we briefly summarize the relevant
aspects of the Khovanov stable homotopy type~\cite{RS-khovanov},
following the more recent box map
construction~\cite{LLS-khovanov-product,LLS-cube} (see
also~\cite{HKK-Kh-htpy}). As sketched in
Section~\ref{sec:conventions}, in a fixed quantum grading $j$, the
Khovanov chain complex $\KhCx_{*,j}(K)$ is obtained as follows:
\begin{enumerate}[label=(Kh-\arabic*),leftmargin=*]
\item\label{step:kh-cube-abelian} First construct a cube-shaped
  diagram of abelian groups,
  $F^{\Kh}_j\from(1\to 0)^N\to\AbelianGroups$, which associates to
  each vertex $v$ the free abelian group generated by all Khovanov
  generators $x$ at $v$ with quantum grading $j$, and associates
  to each edge $v\to w$ the saddle cobordism map from the TQFT
  corresponding to the Frobenius algebra $\ZZ[X]/(X^2)$.
\item\label{step:iterated-cone}\setcounter{dummycounter}{\value{enumi}}
  Viewing $F^{\Kh}_j$ as a cube-shaped diagram of chain complexes,
  take the mapping cone $N$ times, in the $N$ directions of
  the cube, to obtain a single chain complex. (The signs, like
  $(-1)^{v_1+\dots+v_{n-1}}$ from Section~\ref{sec:conventions}, appear
  during this iterated mapping cone construction; the precise signs depend on the
  order in which the mapping cones are done.)
\item Finally, shift the homological grading of the chain complex down
  by $N_-$ to get the Khovanov chain complex $\KhCx_{*,j}(K)$. (The
  total Khovanov complex over all quantum gradings is given by
  $\bigoplus_j\KhCx_{*,j}(K)$.)
\end{enumerate}
In order to remove the choice about the order in which to take the
mapping cones (which amounts to choosing an ordering of the $N$ crossings
of the knot diagram), one can replace Step~\ref{step:iterated-cone} by
the following:
\begin{enumerate}[label=(Kh-\arabic*${}'$),leftmargin=*,start=\value{dummycounter}]
\item Extend $F_j^{\Kh}$ trivially to a diagram $(F^{\Kh}_j)_+$ from
  a slightly larger category $(1\to 0)^N_+$ which has an additional
  object and a unique morphism from every $v\neq 0$ to it, and then
  take the homotopy colimit to obtain a single chain complex. (The
  functor $(F^{\Kh}_j)_+$ sends the new object to the trivial group.)
\end{enumerate}
The lift to a Khovanov stable homotopy type follows the same
outline, replacing the cube of abelian groups by a cube of
topological spaces.  More concretely:
\begin{enumerate}[label=($\KhSpace$-\arabic*),leftmargin=*,start=0]
\item Fix an integer $D\geq N$.
\item\label{step:kh-cube-space}\setcounter{dummycounter}{\value{enumi}}
  Construct a cube-shaped diagram
  $F^{\KhSpace}_j\from (1\to0)^N\to\Spaces$ of based CW complexes
  which associates to each vertex a wedge sum of $D$-dimensional spheres, so that
  its composition with the reduced homology functor,
  $\wt{H}_D\circ F^{\KhSpace}_j$, equals the cube-shaped diagram
  $F^{\Kh}_j$ from Step~\ref{step:kh-cube-abelian}. (It suffices to
  construct a homotopy coherent diagram instead of a strictly
  commuting one.)
\item Extend (trivially) to a diagram $(F^{\KhSpace}_j)_+$ from the
  larger category $(1\to 0)^N_+$ mapping the new object to a $1$-point
  space, and then take homotopy colimit to
  obtain a single topological space.
\item Finally, formally desuspend $(D+N_-)$ times to get the Khovanov
  spectrum $\KhSpace_j(K)$ in quantum grading $j$. (The total Khovanov
  spectrum over all quantum gradings is given by
  $\bigvee_j\KhSpace_j(K)$.)
\end{enumerate}
Of these, the hardest step is Step~\ref{step:kh-cube-space}, which we
undertake by first constructing a homotopy coherent diagram in a third
category---the Burnside 2-category $\BurnsideCat$ of finite sets,
finite correspondences, and bijections between
correspondences. Specifically, letting
$\ZZ\langle\cdot\rangle \from\BurnsideCat\to\AbelianGroups$ denote the
functor that replaces a finite set by the free abelian group generated
by it, we do the following:
\begin{enumerate}[label=($\BurnsideCat$-\arabic*),leftmargin=*]
\item Construct a cube-shaped 2-functor
  $F^\BurnsideCat_j\from (1\to 0)^N\to\BurnsideCat$ which associates
  to each vertex $v$ the set of Khovanov generators $x$ at $v$
  with quantum grading $j$, to each edge $v\to w$ a
  correspondence
  $F^\BurnsideCat_j(v)\stackrel{s}{\leftarrow} F^\BurnsideCat_j(v\to w)\stackrel{t}{\to} F^\BurnsideCat_j(w)$,
  so that $\ZZ\langle\cdot\rangle \circ F^{\BurnsideCat}_j=F^\Kh_j$,
  and to each 2-dimensional face
  $\vcenter{\hbox{\begin{tikzpicture}[xscale=1,yscale=0.2] \node (u)
        at (0,0) {$u$}; \node (v) at (1,1) {$v$}; \node (vp) at (1,-1)
        {$v'$}; \node (w) at (2,0) {$w$}; \draw[->] (u) -- (v);
        \draw[->] (u) -- (vp); \draw[->] (v)--(w); \draw[->] (vp) -- (w);
      \end{tikzpicture}}}$ a 2-morphism (which is an isomorphism of
  correspondences)
  $F^\BurnsideCat_j(v\to w)\circ F^\BurnsideCat_j(u\to
  v)\stackrel{\cong}{\lra} F^\BurnsideCat_j(v'\to w)\circ
  F^\BurnsideCat_j(u\to v')$ satisfying a coherence relation for every
  3-dimensional face. Note that this data specifies, for any $v>w$ in the
  poset $(1>0)^N$, a correspondence
  $F^\BurnsideCat_j(v\to w)\from F^\BurnsideCat_j(v)\to
  F^\BurnsideCat_j(w)$, by composing the correspondences along a
  sequence of oriented edges connecting $v$ to $w$; any two such
  sequences specify canonically isomorphic correspondences. (We can
  also define the total Burnside functor over all quantum gradings as
  $\coprod_j F^\BurnsideCat_j$ where $\amalg$ is defined by taking
  disjoints unions of sets and correspondences at vertices and edges,
  respectively.)
\end{enumerate}
Most of the construction of $F^\BurnsideCat_j$ is forced. The only
choices are the isomorphisms of correspondences for
certain 2-dimensional faces (which we call ladybugs), and we
explicitly choose the isomorphisms for those faces (which we call
ladybug matchings)~\cite[Section 5.4]{RS-khovanov},
\cite[Section 8.1]{LLS-khovanov-product}. Once we have the 2-functor
$F^\BurnsideCat_j\from(1\to0)^N\to \BurnsideCat$, we can carry out
Step~\ref{step:kh-cube-space} by the \emph{box map construction}, as follows:
\begin{enumerate}[label=($\KhSpace$-\arabic*${}'$),leftmargin=*,start=\value{dummycounter}]
\item\label{step:box-maps} For each vertex $v$, define
  $F^\KhSpace_j(v)=\big(\coprod_{x\in F^\BurnsideCat_j(v)}
  B_{x}\big)/\bdy$, where $B_{x}$ is a $D$-dimensional rectangular prism (\emph{box}) associated
  to the Khovanov generator $x$. For each edge $v\to w$, choose
  disjoint $D$-dimensional sub-boxes
  $\{B_b\}_{b\in F^\BurnsideCat_j(v\to w)}$ inside
  $\coprod_{x\in F^\BurnsideCat_j(v)} B_{x}$ so that each $B_b$ lies
  in $B_{s(b)}$; define the map
  $F^\KhSpace_j(v\to w)\from F^\KhSpace_j(v)\to F^\KhSpace_j(w)$ by
  sending each sub-box $B_b\subset B_{s(b)}$ to $B_{t(b)}$ by scaling and
  translation, and the complement of all these sub-boxes to the
  basepoint. Call maps as in the previous sentence \emph{box maps}. To extend this to a homotopy coherent diagram
  $F^\KhSpace_j$ on the entire cube, we specify, for every chain
  $v_\ell>\dots>v_0$ in the poset $(1>0)^N$, a
  $[0,1]^{\ell-1}$-parameter family of box maps
  $F^\KhSpace_j(v_\ell)\to F^\KhSpace_j(v_0)$ satisfying certain
  coherence conditions on its boundary, and refining the
  correspondence
  $F^\BurnsideCat_j(v_\ell\to v_0)\from F_j^\BurnsideCat(v_\ell)\to
  F_j^\BurnsideCat(v_0)$. By induction on $\ell$, and using the
  coherence conditions and the $2$-morphisms in the Burnside category,
  such a family of maps is already defined on the boundary
  $\bdy[0,1]^{\ell-1}$. Extend it to the entire cube $[0,1]^{\ell-1}$
  using $(D-2)$-connectedness of the space of labeled sub-boxes and
  the assumption that $D\geq N$.
\end{enumerate}

The constructions for annular Khovanov complexes and annular Khovanov
homotopy types mirror these definitions. There is an extra annular
grading, and we only consider maps that preserve that
grading. Therefore, in each (quantum, annular) bigrading $(j,k)$, we get
diagrams $\AF^\Kh_{j,k}\co (1\to 0)^N\to\AbelianGroups$, $\AF^\BurnsideCat_{j,k}\co(1\to0)^N\to
\BurnsideCat$, and $\AF^\KhSpace_{j,k}\co (1\to0)^N\to\Spaces$; their
extensions $(\AF^\Kh_{j,k})_+\co (1\to 0)^N_+\to\AbelianGroups$ and
$(\AF^\KhSpace_{j,k})_+\co (1\to0)^N_+\to\Spaces$; and the chain complex
$\AKhCx_{*,j,k}$ and the spectrum $\AKhSpace_{j,k}$. For
the pair $(\ol{K}_1,\ol{K}_0)$ of annular knots whose annular Khovanov
chain complex $\AKhCx(\ol{K}_1,\ol{K}_0)$ is defined as the mapping
cone of
$\Sigma^{0,0,1}\AKhCx(\ol{K}_1)\stackrel{f^+}{\to}\AKhCx(\ol{K}_0)$,
it still may be viewed as a subcomplex of another annular Khovanov
chain complex---see Remark~\ref{rem:kinda-annular}---and therefore, all
these constructions work for the pair $(\ol{K}_1,\ol{K}_0)$ as
well.

Given an intravergent diagram of $K$, the strong inversion induces
$\ZZ/2$-actions on these various objects as follows.
\begin{lemma}\label{lem:the-action}
  The $180^\circ$ rotation on the intravergent diagram of $K$ induces a
  $\ZZ/2$-action on the cube $(1\to 0)^N$ and an \emph{external
  $\ZZ/2$-action} on the 2-functor
  $F^\BurnsideCat_j(K)\from (1\to 0)^N\to\BurnsideCat$ in each quantum
  grading $j$, in the sense
  of Stoffregen-Zhang~\cite[Definition~3.4]{SZ-kh-localization}.
\end{lemma}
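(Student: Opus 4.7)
The plan is to unpack the structure of an external $\ZZ/2$-action and verify each ingredient via the geometric action of $\tau$ on the intravergent diagram. First I would set up the action on the cube. Because the diagram is intravergent, the axis meets $K$ in two points that project to a single point of the diagram, where $K$ has one ``central'' crossing that is fixed by the $180^\circ$ rotation $\tau$; all remaining crossings come in $\tau$-orbits of size two (an off-axis crossing together with its rotate). Numbering the crossings therefore determines an involution $\sigma$ on $\{1,\dots,N\}$, and this extends coordinate-wise to the required $\ZZ/2$-action on $(1\to 0)^N$.

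Next I would describe the bijections at vertices and correspondences on edges. For each cube vertex $v$ the complete resolution $D_v$ is determined by a local resolution choice at each crossing; since $\tau$ is an orientation-preserving isometry of the plane, and the local $0$- and $1$-resolution pictures are preserved by $180^\circ$ rotation, one has $\tau(D_v)=D_{\sigma v}$. Hence $\tau$ induces a bijection between the circles of $D_v$ and those of $D_{\sigma v}$, and pushing forward $\{1,X\}$-labelings yields a bijection $F^{\BurnsideCat}_j(v)\to F^{\BurnsideCat}_j(\sigma v)$ preserving the quantum grading. On an edge $v\to w$ changing coordinate $i$ from $1$ to $0$, the elementary saddle cobordism is carried by $\tau$ to the elementary saddle at coordinate $\sigma(i)$ on the edge $\sigma v\to \sigma w$, giving the required bijection of the edge correspondences in $\BurnsideCat$.

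The main obstacle is checking compatibility with the $2$-morphisms on $2$-dimensional faces and the coherence on $3$-dimensional faces. A $2$-face involving coordinates $i,j$ is either sent by $\sigma$ to a distinct $2$-face, namely that involving $\sigma(i),\sigma(j)$---in which case the $2$-morphism on the image face may simply be defined by transport of structure from the original---or it is $\sigma$-fixed. In the $\sigma$-fixed case the only nontrivial $2$-morphism is the ladybug matching of \cite{RS-khovanov}, which is determined by a local orientation convention on a pair of circles in one of the vertices of the ladybug square. Because $\tau$ is an orientation-preserving planar isometry, it respects this convention, and the resulting matching is automatically $\sigma$-equivariant. The analogous inspection gives the coherence condition on $\sigma$-fixed $3$-faces, while on non-fixed $3$-faces the coherence transports freely across $\sigma$-orbits. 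Assembling these data yields an external $\ZZ/2$-action on $F^{\BurnsideCat}_j(K)$ in the sense of \cite[Definition~3.4]{SZ-kh-localization}.
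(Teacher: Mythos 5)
Your proof takes essentially the same route as the paper's: the $\ZZ/2$-action on crossings induces the cube action; the bijections on vertex sets and edge correspondences are geometrically induced (the paper instead emphasizes that the edge correspondences are multiplicity-free, so the bijection is \emph{forced}); and the key check is that $\tau$ respects the ladybug matchings because $\tau$ is an orientation-preserving planar isometry. One small imprecision: for $2$-faces not fixed by $\sigma$, you write that the $2$-morphism on the image face ``may simply be defined by transport of structure,'' but these $2$-morphisms are not free data---they are already fixed by the ladybug-matching convention in the construction of $F^{\BurnsideCat}_j$. What needs to be verified, for fixed and non-fixed faces alike, is that $\tau$ carries the chosen matching on one face to the chosen matching on its image; this follows (uniformly, no case split needed) from the fact that the ladybug matching is invariant under orientation-preserving planar isotopy, which is exactly the argument you give in the $\sigma$-fixed case and the one the paper invokes for all faces at once.
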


\begin{proof}
  The proof is similar to Stoffregen-Zhang's corresponding result for
  $2$-periodic knots~\cite[Proposition~6.4]{SZ-kh-localization}. The $180^\circ$
  rotation $\tau$ around the axis induces a $\ZZ/2$-action on the $N$
  crossings of $K$, which in turn induces a $\ZZ/2$-action (also
  denoted $\tau$) on the cube category $(1\to 0)^N$ after identifying
  it with $(1\to 0)^{\{\text{crossings of }K\}}$ by ordering the
  crossings. It also induces a $\ZZ/2$-action (still denoted $\tau$)
  on the set of all Khovanov generators in quantum grading $j$,
  sending $F^\BurnsideCat_j(v)$ to $F^\BurnsideCat_j(\tau
  v)$. Moreover, for each edge $v\to w$, it induces an
  isomorphism of correspondences
  \[
    \begin{tikzpicture}[xscale=2.5,yscale=1.2]
      \node (v) at (0,0) {$F^\BurnsideCat_j(v)$};
      \node (e) at (1,0) {$F^\BurnsideCat_j(v\to w)$};
      \node (w) at (2,0) {$F^\BurnsideCat_j(w)$};
      \node (tv) at (0,-1) {$F^\BurnsideCat_j(\tau v)$};
      \node (te) at (1,-1) {$F^\BurnsideCat_j(\tau v\to\tau w)$};
      \node (tw) at (2,-1) {$F^\BurnsideCat_j(\tau w)$};
      \draw[->] (e) -- (v) node[midway,tightnode,anchor=south] {\tiny $s$};
      \draw[->] (e) -- (w) node[midway,tightnode,anchor=south] {\tiny $t$};
      \draw[->] (te) -- (tv) node[midway,tightnode,anchor=north] {\tiny $s$};
      \draw[->] (te) -- (tw) node[midway,tightnode,anchor=north] {\tiny $t$};
      \draw[->] (v)--(tv) node[midway,tightnode,anchor=west] {\tiny $\tau$};
      \draw[->,dashed] (e)--(te) node[midway,tightnode,anchor=west] {\tiny $\tau$};
      \draw[->] (w)--(tw) node[midway,tightnode,anchor=west] {\tiny $\tau$};
    \end{tikzpicture}
  \]
  since for any Khovanov generators
  $x\in F^\BurnsideCat_j(v),y\in F^\BurnsideCat_j(w)$, the set
  $s^{-1}(x)\cap t^{-1}(y)$ has 0 or 1 elements. So the only thing to
  check is that $\tau$ respects the ladybug matchings across
  2-dimensional faces, which holds since the ladybug matching is
  invariant under planar isotopy, and in particular, the $180^\circ$
  rotation $\tau$.
\end{proof}

This $\ZZ/2$-action on
$F^\BurnsideCat_j(K)\from (1\to 0)^N\to\BurnsideCat$ has a \emph{fixed
point functor}
$(F^\BurnsideCat_j(K))^\tau$~\cite[Definition~3.11]{SZ-kh-localization}. The functor
$(F^\BurnsideCat_j(K))^\tau$ is defined on the fixed subcategory of the cube category
$(1\to0)^N$, which is itself isomorphic to the cube category
$(1\to0)^{\ol{N}+1}$ where $\ol{N}=(N-1)/2$ is the number of crossings of the
quotient diagram $\ol{K}$; $(F^\BurnsideCat_j(K))^\tau$ assigns to vertices and edges the
fixed subset of the $\tau$-action on the sets and correspondences,
respectively. It turns out that these fixed point functors are related
to the Burnside 2-functors associated to the pair of annular knots
$(\ol{K}_1,\ol{K}_0)$:

\begin{lemma}\label{lem:the-fixed-points}
  For any quantum grading $j$, the fixed point functor
  $(F^\BurnsideCat_j(K))^\tau$ is isomorphic to
  $\coprod_{\ol{\jmath},k\mid 2\ol{\jmath}+k=j-1+3\Delta}
  \AF^\BurnsideCat_{\ol{\jmath},k}(\ol{K}_1,\ol{K}_0)$, where
  $\Delta$ is as in Equation~\eqref{eq:Delta}.
\end{lemma}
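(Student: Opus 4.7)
The plan is to construct an explicit isomorphism of $2$-functors, identifying in turn the source category, the vertex assignment, the edge correspondences, and the grading shifts. First, since $\tau$ permutes the $N=2\ol N+1$ crossings of the intravergent diagram with exactly one fixed point (the axis crossing) and $\ol N$ free orbits, the fixed subcategory of $(1\to 0)^N$ is canonically $(1\to 0)^{\ol N+1}$: the first $\ol N$ coordinates record the common resolution of each off-axis orbit, and the final coordinate records the resolution of the axis crossing, separating $\ol K_0$ (final coordinate $0$) from $\ol K_1$ (final coordinate $1$). By \Remark{kinda-annular}, $\AKhCx(\ol K_1,\ol K_0)$ sits inside an annular Khovanov complex with a cube of the same shape $(1\to 0)^{\ol N+1}$, so the source categories match.

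Next, for each fixed vertex I would biject $\tau$-invariant Khovanov generators with labeled annular resolutions of the relevant $\ol K_i$. A $\tau$-invariant complete resolution has two kinds of circles: $\tau$-fixed circles, which must cross the axis and hence project to essential circles in the quotient annulus, and $\tau$-paired circles, each pair projecting to a single nullhomotopic circle. A $\tau$-invariant labeling assigns an arbitrary label in $\{1,X\}$ to each fixed circle and a single label to each orbit, which is exactly the data of an annular Khovanov labeling of the quotient resolution.

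An edge in the fixed subcube either changes one off-axis orbit coordinate or changes the axis coordinate. In the first case the edge represents a length-two path in the full cube (simultaneously flipping both paired crossings), and its $\tau$-fixed correspondence is the quotient of a symmetric double saddle, matching the saddle correspondence at the corresponding quotient crossing. In the second case the edge corresponds to the $\tau$-equivariant saddle at the axis crossing; taking $\tau$-fixed points of the associated correspondence should reproduce precisely the correspondence defining the axis-moving map $f^+$ of \Section{moving}. Ladybug matchings are determined by planar, $\tau$-invariant data, so they descend to well-defined $2$-morphisms on $2$-dimensional faces.

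Finally, I would verify the grading shift. Writing $N=2\ol N+1$ and $|v|=2|\ol v|+v_{\mathrm{axis}}$, and using that $\tau$-paired orbits contribute their labels twice upstairs but once downstairs while axis-fixed circles contribute to both the quantum and annular gradings of the quotient, the identity $2\ol\jmath+k=j-1+3\Delta$ should fall out by direct computation once one also accounts for the $\Sigma^{0,0,1}$ shift in the definition of $\AKhCx(\ol K_1,\ol K_0)$ that supplies a $-1$ exactly when $v_{\mathrm{axis}}=1$. The main obstacle will be the axis-edge step: pinning down that the $\tau$-fixed correspondence of the axis-crossing saddle reproduces $f^+$ rather than $f^-$ or some twist. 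This requires a careful local model of how labels $1$ and $X$ on an axis-fixed circle split under the $\tau$-equivariant saddle; the annular-grading constraint from the previous paragraph, together with the labeling bijection, should force the sign to be $+$.
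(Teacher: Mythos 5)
Your plan follows the same route as the paper: identify the fixed subcube with $(1\to 0)^{\ol N+1}$, biject $\tau$-fixed generators with generators of $\AF^\BurnsideCat(\ol K_1,\ol K_0)$ via fixed circles $\leftrightarrow$ essential circles and $\tau$-orbits $\leftrightarrow$ nullhomotopic circles, check the edge correspondences (Stoffregen--Zhang for off-axis edges, a new case analysis identifying the axis edge with $f^+$), verify the grading shift, and then check compatibility on $2$-faces. The vertex bijection and grading computation you defer would indeed go through (the paper writes out the short arithmetic), and your flagged ``main obstacle'' on the axis edge is handled in the paper by noting that the $\tau$-fixed part of the axis saddle is exactly the label-$X$ part in the split case and the label-$1$ part in the merge case, which agrees with $f^+=m\circ\iota_1$ and not $f^-$; the grading constraint is consistent with this but the real check is the labeling one, as you suspect.

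The genuine gap is the ladybug step. Your sentence---``Ladybug matchings are determined by planar, $\tau$-invariant data, so they descend to well-defined $2$-morphisms''---is an argument that $\tau$ respects the ladybug matching of $K$, which is what \Lemma{the-action} says, but it is not what \Lemma{the-fixed-points} needs. Here one must show that the $2$-morphisms of $(F^\BurnsideCat_j(K))^\tau$ (inherited from $K$'s ladybug matchings) and the $2$-morphisms of $\AF^\BurnsideCat_{\ol\jmath,k}(\ol K_1,\ol K_0)$ (coming from $\ol K_1$, $\ol K_0$'s ladybug matchings together with the $f^+$ mapping cone) agree under the vertex bijection $\eta$. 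That requires two things you do not address: first, for non-ladybug $2$-faces the check is automatic because every relevant set $s^{-1}(x)\cap t^{-1}(y)$ has at most one element on both sides; second, and crucially, one must rule out ladybug $2$-faces whose directions include the axis coordinate. The paper does this by invoking \Remark{kinda-annular}, which exhibits $\AF^\BurnsideCat(\ol K_1,\ol K_0)$ as a subfunctor of the Burnside functor of an annular knot obtained from $\ol K_1$ by a Reidemeister~I move across the axis, and observing that a Reidemeister~I crossing cannot participate in a ladybug; only then does the remaining case (all axis coordinates equal) reduce to the Stoffregen--Zhang $2$-periodic lemma. Without this input, your proposed proof would not close the case of a putative ladybug straddling the axis direction and an off-axis direction.
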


\begin{proof}
  The proof is similar to the $2$-periodic
  case~\cite[Theorem~6.7]{SZ-kh-localization}. First, order the $\ol{N}$
  crossings of $\ol{K}$ arbitrarily. Then, at any vertex
  $v=(v_1,\dots,v_{\ol{N}+1})\in (1\to 0)^{\ol{N}+1}$, the set
  $\AF^\BurnsideCat_{\ol{\jmath},k}(\ol{K}_1,\ol{K}_0)(v)$ is defined to
  be the set of Khovanov generators of $\ol{K}_{v_{\ol{N}+1}}$ over the
  vertex $(v_1,\dots,v_{\ol{N}})$ in (quantum, annular) bigrading
  $(\ol{\jmath},k-v_{\ol{N}+1})$.

  Order the $N$ crossings of $K$ such that the crossing on the
  axis is ordered last and the quotient map
  $\{\text{other crossings of }K\}\to \{\text{crossings of }\ol{K}\}$
  is order-preserving. Then, there is an inclusion of cube categories
  $\iota\from (1\to0)^{\ol{N}+1}\to(1\to0)^N$ which sends the vertex
  $v=(v_1,\dots,v_{\ol{N}},v_{\ol{N}+1})$ to the vertex
  $\iota(v)=(v_1,v_1,\dots,v_{\ol{N}},v_{\ol{N}},v_{\ol{N}+1})\in(1\to0)^N$; the image is
  precisely the fixed subcategory of $(1\to0)^N$.

  We construct a natural isomorphism
  \[
    \eta\co \Bigl(\hspace{-1em}\coprod_{\substack{\ol{\jmath},k\\
        2\ol{\jmath}+k=j-1+3\Delta}}
    \hspace{-1em}\AF^\BurnsideCat_{\ol{\jmath},k}(\ol{K}_1,\ol{K}_0)\Bigr)
    \lra \Bigl((F^\BurnsideCat_j(K))^\tau \circ\iota\Bigr)
  \]
  between the two Burnside functors.
  For any vertex $v\in(1\to0)^{\ol{N}+1}$ and any Khovanov generator
  $x\in \AF^\BurnsideCat_{\ol{\jmath},k}(\ol{K}_1,\ol{K}_0)(v)$, let
  $\eta(x)$ be the Khovanov generator of $K$ over the vertex
  $\iota(v)$ which labels each circle in the $\iota(v)$-resolution of $K$
  by the same label that $x$ labels its quotient circle in the
  $(v_1,\dots,v_{\ol{N}})$-resolution of $\ol{K}_{v_{\ol{N}+1}}$. It
  is a straightforward calculation that $\eta(x)$ has quantum grading
  $j$ if and only if $(\ol{\jmath},k)$ satisfies
  $2\ol{\jmath}+k=j-1+3\Delta$. For the reader's convenience, we
  summarize the calculation below.

  In the $(v_1,\dots,v_{\ol{N}})$-resolution of the annular knot $\ol{K}_{v_{\ol{N}+1}}$,
  let $a_1$ and $a_X$ be the numbers of essential circles that $x$ labels by
  $1$ and $X$, respectively, and let $b_1$ and $b_X$ be the numbers of
  non-essential circles that $x$ labels by $1$ and $X$, respectively. Then,
  \begin{align*}
    \ol{\jmath}&=\ol{N}-3\ol{N}_-+(|v|-v_{\ol{N}+1})+(a_X+b_X)-(a_1+b_1)\\
    k-v_{\ol{N}+1}&=a_1-a_X\\
    2\ol{\jmath}+k+6\ol{N}_-+1&=(2\ol{N}+1)+(2|v|-v_{\ol{N}+1})+(a_X+2b_X)-(a_1+2b_1).
  \end{align*}
  In the $\iota(v)=(v_1,v_1,\dots,v_{\ol{N}},v_{\ol{N}},v_{\ol{N}+1})$-resolution of $K$,
  the number of circles labeled $1,X$ by the generator $\eta(x)$ is
  $(a_1+2b_1)$ and $(a_X+2b_X)$, respectively. Then,
  \[
    j+3N_-=N+|\iota(v)|+(a_X+2b_X)-(a_1+2b_1)=(2\ol{N}+1)+(2|v|-v_{\ol{N}+1})+(a_X+2b_X)-(a_1+2b_1).
  \]
  Therefore, $2\ol{\jmath}+k=j-1+3(N_--2\ol{N}_-)=j-1+3\Delta$.

  To
  make the notation less cumbersome, let $\AF^\BurnsideCat_{[\ol{\jmath},k]}=
  \coprod_{\ol{\jmath},k\mid2\ol{\jmath}+k=j-1+3\Delta}\AF^\BurnsideCat_{\ol{\jmath},k}$.

  Next, we must specify the natural isomorphism $\eta$ on the
  $1$-morphisms, that is, for any edge $v\to w$ in
  $\{1\to0\}^{\ol{N}+1}$, we must specify an isomorphism between
  correspondences:
  \[
    \AF^\BurnsideCat_{[\ol{\jmath},k]}(\ol{K}_1,\ol{K}_0)(v\to w)
    \stackrel{\eta}{\lra} F^\BurnsideCat_{j}(K)(\iota(v)\to\iota(w)).
  \]
  As in the proof of Lemma~\ref{lem:the-action}, for any 
  generators
  $x\in\AF^\BurnsideCat_{[\ol{\jmath},k]}(\ol{K}_1,\ol{K}_0)(v)$,
  $y\in\AF^\BurnsideCat_{[\ol{\jmath},k]}(\ol{K}_1,\ol{K}_0)(w)$, the set
  $s^{-1}(x)\cap t^{-1}(y)\subset
  \AF^\BurnsideCat_{[\ol{\jmath},k]}(\ol{K}_1,\ol{K}_0)(v\to w)$ has
  either 0 or 1 elements, and the set
  $s^{-1}(\eta(x))\cap t^{-1}(\eta(y))\subset
  F^\BurnsideCat_{j}(K)(\iota(v)\to\iota(w))$ also has either 0 or 1
  elements, correspondingly; therefore, the isomorphism $\eta$ between
  the correspondences is forced. This is checked by a direct case
  analysis: When $v_{\ol{N}+1}=w_{\ol{N}+1}$, then this is Stoffregen-Zhang's case
  analysis for the
  2-periodic link $K_{v_{\ol{N}+1}}$~\cite[Theorem~6.8]{SZ-kh-localization}. When
  $v_{\ol{N}+1}>w_{\ol{N}+1}$, it follows from a similar (but shorter)
  case analysis. Consider the axis-moving isotopy from the
  $(v_1,\dots,v_{\ol{N}})$-resolution of the annular knot $\ol{K}_1$
  to the corresponding resolution of the annular knot $\ol{K}_0$.
  There are two cases, depending on whether an essential circle
  becomes inessential or an inessential circle becomes essential. In
  the first (respectively second) case,
  $s^{-1}(x)\cap t^{-1}(w)\subset
  \AF^\BurnsideCat_{[\ol{\jmath},k]}(\ol{K}_1,\ol{K}_0)(v\to w)$ is
  non-empty (and has only one element) if and only if $x$ and $y$
  label the moving circle by $X$ (respectively $1$) and all other
  circles by the same labels. In the picture for $K$, we get a
  corresponding saddle cobordism from the
  $\iota(v)$-resolution of $K$ to the $\iota(w)$-resolution of $K$. In
  the first (respectively second) case, the saddle is a split
  (respectively merge) and
  $s^{-1}(\eta(x))\cap t^{-1}(\eta(y))\subset
  F^\BurnsideCat_{j}(K)(\iota(v)\to\iota(w))$ is non-empty (and has
  only one element) if and only if $\eta(x)$ and $\eta(y)$ label all
  the circles involved in the saddle by $X$ (respectively $1$), and
  all other circles by the same labels.

  Finally, we have to check that these isomorphisms of
  correspondences are compatible across $2$-dimensional faces. That
  is, given a 2-dimensional face
  $\vcenter{\hbox{\begin{tikzpicture}[xscale=1,yscale=0.2] \node (u)
        at (0,0) {$u$}; \node (v) at (1,1) {$v$}; \node (vp) at (1,-1)
        {$v'$}; \node (w) at (2,0) {$w$}; \draw[->] (u) -- (v);
        \draw[->] (u) -- (vp); \draw[->] (v)--(w); \draw[->] (vp) --
        (w);
      \end{tikzpicture}}}$ in $\{1\to0\}^{\ol{N}+1}$, we have to check that the following diagram commutes:
  \[
    \begin{tikzpicture}[xscale=8.5,yscale=-1.2]
      \node (af0) at (0,0) {$\AF^\BurnsideCat_{[\ol{\jmath},k]}(\ol{K}_1,\ol{K}_0)(v\to w)\circ \AF^\BurnsideCat_{[\ol{\jmath},k]}(\ol{K}_1,\ol{K}_0)(u\to v)$};
      \node (af1) at (0,1) {$\AF^\BurnsideCat_{[\ol{\jmath},k]}(\ol{K}_1,\ol{K}_0)(v'\to w)\circ \AF^\BurnsideCat_{[\ol{\jmath},k]}(\ol{K}_1,\ol{K}_0)(u\to v')$};
      \node (f0) at (1,0) {$F^\BurnsideCat_{j}(K)(\iota(v)\to\iota(w))\circ F^\BurnsideCat_{j}(K)(\iota(u)\to\iota(v))$};
      \node (f1) at (1,1) {$F^\BurnsideCat_{j}(K)(\iota(v')\to\iota(w))\circ F^\BurnsideCat_{j}(K)(\iota(u)\to\iota(v'))$};
      
      \draw[->] (af0) --(af1);
      \draw[->] (f0) --(f1);
      \draw[->] (af0) --(f0);
      \draw[->] (af1) --(f1);
    \end{tikzpicture}
  \]
  where the horizontal arrows are induced by the isomorphisms that we
  just constructed, and the vertical arrows are induced by the
  isomorphisms that are part of the data for the respective Burnside
  2-functors. Unless the 2-dimensional face is a ladybug, for any pair
  of generators
  $x\in\AF^\BurnsideCat_{[\ol{\jmath},k]}(\ol{K}_1,\ol{K}_0)(u),
  y\in\AF^\BurnsideCat_{[\ol{\jmath},k]}(\ol{K}_1,\ol{K}_0)(w)$, both
  the sets
  $s^{-1}(x)\cap t^{-1}(w)\subset
  \AF^\BurnsideCat_{[\ol{\jmath},k]}(\ol{K}_1,\ol{K}_0)(u\to w)$ and
  $s^{-1}(\eta(x))\cap t^{-1}(\eta(y))\subset
  F^\BurnsideCat_{j}(K)(\iota(u)\to\iota(w))$ have 0 or 1 elements,
  and so the check is automatic. So the only case remaining is when
  the 2-dimensional face is a ladybug. However, recall from
  Remark~\ref{rem:kinda-annular} that the functor
  $\AF^\BurnsideCat_{[\ol{\jmath},k]}(\ol{K}_1,\ol{K}_0)$ may be viewed
  as subfunctor of the Burnside functor associated to a different
  annular knot obtained from $\ol{K}_1$ by performing a Reidemeister I
  move. However, such a crossing (coming from a Reidemeister I move)
  cannot be involved in a ladybug configuration. Therefore, in order
  to be a ladybug, we must have
  $u_{\ol{N}+1}=v_{\ol{N}+1}=v'_{\ol{N}+1}=w_{\ol{N}+1}$; and in that case, the
  commutation of the above diagram follows
  from the analogue for the 2-periodic link
  $K_{u_{\ol{N}+1}}$~\cite[Lemma~6.15]{SZ-kh-localization}.
\end{proof}

Now, given the $\ZZ/2$-action on the Burnside functor
$F^\BurnsideCat_j(K)$, and the above identification of its fixed point
functor with those of the quotient annular knots
$(\ol{K}_1,\ol{K}_0)$, all that remains is to refine these actions and
the fixed points to the category of topology spaces. This is precisely
Stoffregen-Zhang's central thesis:

\begin{proposition}\cite[Proposition~5.10]{SZ-kh-localization}\label{prop:equivariant-box-maps}
  Let $F^\BurnsideCat\from (1\to0)^N\to\BurnsideCat$ be a Burnside
  2-functor with an external $\ZZ/2$-action $\tau$ and
  $(F^\BurnsideCat)^\tau$ denote the fixed point functor. Then, the
  homotopy coherent diagram $F^\KhSpace\from(1\to0)^N\to\Spaces$
  refining $F^\BurnsideCat$ using the box map construction, as in
  Step~\ref{step:box-maps}, may be chosen $\ZZ/2$-equivariantly so
  that the fixed point homotopy coherent diagram $(F^\KhSpace)^\tau$
  refines $(F^\BurnsideCat)^\tau$ using box maps.
\end{proposition}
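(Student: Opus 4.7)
The plan is to follow Stoffregen-Zhang's strategy, doing the inductive box map construction equivariantly by exploiting the orbit structure of chains in the cube under $\tau$ and an equivariant connectivity estimate for the space of box map refinements.

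First I would set up equivariant notation. For each vertex $v$, form the total box $F^\KhSpace(v)=\big(\coprod_{x\in F^\BurnsideCat(v)}B_x\big)/\bdy$, and choose the assignment $v\mapsto F^\KhSpace(v)$ so that $\tau$ permutes the boxes $B_x$ according to the external $\ZZ/2$-action on $F^\BurnsideCat(v)$; at a $\tau$-fixed vertex this action restricts to a based involution of $F^\KhSpace(v)$ whose fixed subspace is canonically $(F^\KhSpace)^\tau(v)=\big(\coprod_{x\in(F^\BurnsideCat)^\tau(v)}B_x\big)/\bdy$. Stratify chains $v_\ell>\dots>v_0$ in the poset $(1>0)^N$ into free $\tau$-orbits (the chain and $\tau(v_\ell)>\dots>\tau(v_0)$ are distinct) and fixed chains (every $v_i$ is $\tau$-fixed, i.e., the chain lies in the image of the inclusion $\iota$ of the fixed subcube).

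I would then induct on $\ell$. For a chain of length $\ell$, Step~\ref{step:box-maps} requires a $[0,1]^{\ell-1}$-parameter family of box maps refining the correspondence $F^\BurnsideCat(v_\ell\to v_0)$ and extending the already-constructed data on $\bdy[0,1]^{\ell-1}$. For each free $\tau$-orbit of chains, pick one representative, carry out the standard (non-equivariant) extension using $(D-2)$-connectedness of the configuration space of labeled sub-boxes, and then define the family on the $\tau$-translate by pushing forward via $\tau$; this is automatically consistent because the two representatives are distinct. For each fixed chain, by induction the boundary family is $\tau$-equivariant, and I want to extend $\tau$-equivariantly over $[0,1]^{\ell-1}$.

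The main obstacle, and the heart of the argument, is the equivariant connectivity claim: the space $\mathcal{X}$ of box map refinements of $F^\BurnsideCat(v_\ell\to v_0)$ carries a $\tau$-action, and I need its fixed subspace $\mathcal{X}^\tau$ to be $(\ell-2)$-connected so that the existing map $S^{\ell-2}\to\mathcal{X}^\tau$ (given by the equivariant boundary data) extends over $D^{\ell-1}$. The key observation is that a $\tau$-fixed configuration of labeled sub-boxes inside $F^\KhSpace(v_\ell)$ consists of a $\tau$-invariant collection of sub-boxes labeled by $\tau$-invariant elements of the correspondence; equivalently, it is determined by an arbitrary configuration of sub-boxes labeled by elements of the fixed correspondence $(F^\BurnsideCat)^\tau(v_\ell\to v_0)$ sitting inside the fixed boxes, together with a configuration of $\tau$-orbits of boxes labeled by free orbits of generators, and the latter is again a space of labeled sub-box configurations. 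Each of these two factors is $(D-2)$-connected for $D\geq N$ by the standard argument, so their product $\mathcal{X}^\tau$ is $(D-2)$-connected, which suffices since $\ell-1\leq N\leq D$. Finally, the identification of the fixed subspace with a configuration space for $(F^\BurnsideCat)^\tau$ shows that the $\tau$-fixed family of box maps we produce is itself a valid box map refinement of $(F^\BurnsideCat)^\tau$, yielding $(F^\KhSpace)^\tau$ as claimed.
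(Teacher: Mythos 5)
Your high-level strategy is the right one and matches the paper's proof (which summarizes \cite[Lemma~4.7 and Proposition~5.10]{SZ-kh-localization}): induct on chains, treat free $\tau$-orbits of chains by choosing a representative and translating, and treat $\tau$-fixed chains by splitting the correspondence into $\tau$-fixed labels and free $\tau$-orbits of labels. However, there is a genuine gap: you never specify the $\ZZ/2$-action on the box $B_x$ for a $\tau$-fixed generator $x$, and the entire argument hinges on choosing this action correctly. Your formula $(F^\KhSpace)^\tau(v)=\bigl(\coprod_{x\in(F^\BurnsideCat)^\tau(v)}B_x\bigr)/\bdy$ reads as if $\tau$ acts trivially on each fixed box, but then for a fixed chain with a free $\tau$-orbit $\{b,\tau b\}$ of labels satisfying $s(b)=s(\tau b)$, there is no way to have two disjoint sub-boxes inside $B_{s(b)}$ swapped by $\tau$. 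The paper's device is to use $(D_1+D_2)$-dimensional boxes with $\tau$ acting by reflection $(x_1,\dots,x_{D_1+D_2})\mapsto(1-x_1,\dots,1-x_{D_1},x_{D_1+1},\dots,x_{D_1+D_2})$, so that the fixed locus $\{1/2\}^{D_1}\times[0,1]^{D_2}$ of a box is itself a $D_2$-dimensional box and its complement deformation retracts to $S^{D_1-1}$.

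This omission also breaks your connectivity estimate. You claim both factors of $\mathcal{X}^\tau$ are $(D-2)$-connected for $D\geq N$, but in fact the configuration space of sub-boxes for $\tau$-fixed labels, confined to the fixed locus, is only $(D_2-2)$-connected, and the configuration space of free-orbit sub-boxes, confined to the complement of the fixed locus, is only $(D_1-2)$-connected (it retracts to configurations in $S^{D_1-1}\times[0,1]^{D_2}$). So you need \emph{both} $D_1\geq N$ and $D_2\geq N$, i.e., total box dimension at least $2N$, not $N$. Also, the two factors are not a literal product since the sub-boxes must be mutually disjoint; you should build them in stages (say, free-orbit sub-boxes first, then fixed-label sub-boxes thickened equivariantly in the remaining space), as the paper does. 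These are not cosmetic issues: without an explicit nontrivial $\tau$-action on the boxes with a box-shaped fixed locus of sufficiently high dimension \emph{and} codimension, neither the claim that $(F^\KhSpace)^\tau$ is again a box-map diagram nor the claim that the extension over $[0,1]^{\ell-1}$ exists goes through.
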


\begin{proof} For the reader's convenience, we sketch the proof 
(summarizing the proofs of~\cite[Lemma~4.7 and Proposition~5.10]{SZ-kh-localization}).

  Fix $D_1,D_2\geq N$. For every $x\in\coprod_{v}F^\BurnsideCat(v)$,
  associate a $(D_1+D_2)$-dimensional box $B_x\cong[0,1]^{D_1+D_2}$;
  endow it with the $\ZZ/2$-action $\tau$ which reflects the
  first $D_1$-coordinates and is the identity along the last
  $D_2$-coordinates, i.e.,
  $\tau(x_1,\dots,x_{D_1+D_2})=(1-x_1,\dots,1-x_{D_1},x_{D_1+1},\dots,x_{D_1+D_2})$.

  As in Step~\ref{step:box-maps}, for chains $v_\ell>\dots>v_0$ in the
  poset $(1>0)^N$, we will construct a $[0,1]^{\ell-1}$-parameter family
  of box maps $F^\KhSpace_j(v_\ell)\to F^\KhSpace_j(v_0)$ refining the
  correspondence
  $F^\BurnsideCat(v_\ell\to v_0)\from F^\BurnsideCat(v_\ell)\to
  F^\BurnsideCat(v_0)$ by induction on $\ell$. These maps will
  already be specified on the boundary $\bdy[0,1]^{\ell-1}$ by the
  compatibility condition.  There are two cases:
  \begin{itemize}[leftmargin=*]
  \item If the entire chain $v_\ell>\dots>v_0$ is not fixed by $\tau$,
    choose one of the two chains $\mf{c}=(v_\ell>\dots>v_0)$ or
    $\tau\mf{c}=(\tau v_\ell>\dots>\tau v_0)$ arbitrarily; without loss of
    generality, say we pick $\mf{c}$. Construct the
    $[0,1]^{\ell-1}$-parameter family of box maps for
    $\mf{c}$, refining the correspondence
    $F^\BurnsideCat(v_\ell\to v_0)$, arbitrarily using the
    $(D_1+D_2-2)$-connectedness of the space of labeled sub-boxes.
    Then define the $[0,1]^{\ell-1}$-parameter family of
    box maps for the other chain $\tau\mf{c}$, refining the
    correspondence $F^\BurnsideCat(\tau v_\ell\to \tau v_0)$, by
    pre-composing and post-composing by $\tau$, as well as relabeling
    the sub-boxes by the map
    $\tau\from F^\BurnsideCat(v_\ell\to v_0)\to F^\BurnsideCat(\tau
    v_\ell\to \tau v_0)$.
  \item If the entire chain $v_\ell>\dots>v_0$ is fixed by $\tau$,
    construct the $[0,1]^{\ell-1}$-parameter family of box maps
    refining the correspondence $F^\BurnsideCat(v_\ell\to v_0)$ as follows.
    \begin{itemize}[leftmargin=*]
    \item Let $A\subset F^\BurnsideCat(v_\ell\to v_0)$ be the subset
      not fixed by $\tau$. From every pair $\{a,\tau a\}\subset A$,
      choose one element arbitrarily. Let $B\subset A$ be the subset of chosen
      elements. Pick the $[0,1]^{\ell-1}$-parameter family of
      sub-boxes labeled by $B$ in the complement of the $\tau$-fixed
      subspace of the boxes
      using the $(D_1-2)$-connectedness of
      that space. Construct the $[0,1]^{\ell-1}$-parameter of
      sub-boxes labeled by $A\setminus B$ by applying $\tau$.
    \item Let $C\subset F^\BurnsideCat(v_\ell\to v_0)$ be the subset
      fixed by $\tau$. Pick the $[0,1]^{\ell-1}$-parameter family of
      sub-boxes labeled by $C$ symmetrically with respect to
      $\tau$. (First choose a $[0,1]^{\ell-1}$-family of
      $D_2$-dimensional boxes inside the fixed subset
      $\{\tfrac{1}{2}\}^{D_1}\times[0,1]^{D_2}$ using the
      $(D_2-2)$-connectedness of that space, and then thicken the
      boxes $\tau$-equivariantly to get $(D_1+D_2)$-dimensional boxes,
      while staying disjoint from the sub-boxes labeled by $A$.)
    \end{itemize}
  \end{itemize}

  This produces a homotopy coherent diagram $F^\KhSpace$ refining
  $F^\BurnsideCat$ using $(D_1+D_2)$-dimensional box maps, and the
  fixed point functor $(F^\KhSpace)^\tau$ is also a homotopy coherent
  diagram refining $(F^\BurnsideCat)^\tau$ using $D_2$-dimensional
  box maps.
\end{proof}

Combining these ingredients, we get:
\begin{proposition}\label{prop:main-fixed-pts}
  The strong inversion of $K$ induces a $\ZZ/2$-action on the Khovanov
  spectrum $\KhSpace(K)$ whose geometric fixed point set is
  $\AKhSpace(\ol{K}_1,\ol{K}_0)$ up to some formal
  (de)suspension. Keeping track of quantum gradings, the geometric fixed point set
  of the $\ZZ/2$-action on $\KhSpace_j(K)$ is the spectrum
  $\bigvee_{\ol{\jmath},k\mid
    2\ol{\jmath}+k=j-1+3\Delta}\AKhSpace_{\ol{\jmath},k}(\ol{K}_1,\ol{K}_0)$,
  up to some formal (de)suspension.
\end{proposition}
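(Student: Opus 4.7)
The strategy is to assemble the three preceding ingredients of this section. For each quantum grading $j$, Lemma~\ref{lem:the-action} supplies an external $\ZZ/2$-action $\tau$ on the Burnside $2$-functor $F^\BurnsideCat_j(K)\co(1\to 0)^N\to\BurnsideCat$, and Proposition~\ref{prop:equivariant-box-maps} refines this to a $\ZZ/2$-equivariant homotopy coherent diagram $F^\KhSpace_j\co(1\to 0)^N\to\Spaces$ built from $(D_1+D_2)$-dimensional box maps, with the additional property that the fixed subdiagram $(F^\KhSpace_j)^\tau$ is itself a homotopy coherent diagram refining $(F^\BurnsideCat_j(K))^\tau$ via $D_2$-dimensional box maps on the fixed subcategory of $(1\to 0)^N$.

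Next, I would extend trivially to $(1\to 0)^N_+$, take the $\ZZ/2$-equivariant homotopy colimit, and formally desuspend $(D_1+D_2+N_-)$ times to obtain $\KhSpace_j(K)$ with its $\ZZ/2$-action. Because the nontrivial part of $\tau$ on each ambient box is a linear reflection and the box-map construction in Proposition~\ref{prop:equivariant-box-maps} was performed symmetrically with respect to that reflection, the geometric fixed point spectrum is computed level-wise by the fixed points of each space in the diagram, which then assemble into the homotopy colimit of the fixed subdiagram over the fixed subcategory of $(1\to 0)^N_+$. This fixed subcategory is canonically the extended cube $(1\to 0)^{\ol{N}+1}_+$, precisely the shape used to build the annular Khovanov spectrum of the pair $(\ol{K}_1,\ol{K}_0)$ (via the mapping-cone interpretation of Remark~\ref{rem:kinda-annular}).

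Finally, I would invoke Lemma~\ref{lem:the-fixed-points} to identify $(F^\BurnsideCat_j(K))^\tau$ with $\coprod_{\ol{\jmath},k\mid 2\ol{\jmath}+k=j-1+3\Delta}\AF^\BurnsideCat_{\ol{\jmath},k}(\ol{K}_1,\ol{K}_0)$. Since box-map realizations and homotopy colimits turn disjoint unions into wedges, this presents the geometric fixed points as $\bigvee_{\ol{\jmath},k\mid 2\ol{\jmath}+k=j-1+3\Delta}\AKhSpace_{\ol{\jmath},k}(\ol{K}_1,\ol{K}_0)$ up to a formal (de)suspension, whose value is determined by reconciling the ambient desuspension $D_1+D_2+N_-$ with the intrinsic desuspension $D_2+\ol{N}_-$ (together with the mapping-cone shift from Remark~\ref{rem:kinda-annular}) used in defining the annular side. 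The one step that requires real care, exactly as in the $2$-periodic arguments of Stoffregen-Zhang~\cite{SZ-kh-localization}, is verifying that the symmetric box-map construction is rigid enough that geometric fixed points commute with the homotopy colimit used to form $\KhSpace_j(K)$; once that is in hand, everything else is naturality and the bookkeeping of suspension shifts.
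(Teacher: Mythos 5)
Your proposal is correct and follows essentially the same route as the paper's proof: assemble Lemma~\ref{lem:the-action}, Proposition~\ref{prop:equivariant-box-maps}, and Lemma~\ref{lem:the-fixed-points}, use that geometric fixed points commute with the (equivariant) homotopy colimit over $(1\to 0)^N_+$, and track the formal desuspensions. The paper states these steps more tersely, but the logical content matches yours.
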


\begin{proof}
  Choose the homotopy coherent diagram $F^\KhSpace_j(K)$
  $\ZZ/2$-equivariantly, as in
  Proposition~\ref{prop:equivariant-box-maps}. Up to some
  (de)suspension, the Khovanov spectrum $\KhSpace_j(K)$ is the
  homotopy colimit of the extended diagram $(F^\KhSpace_j(K))_+$. The
  geometric fixed point set of this homotopy colimit is the homotopy
  colimit of the extended fixed point functor
  $(F^\KhSpace_j(K))^\tau_+$. But by
  Lemma~\ref{lem:the-fixed-points} and
  Proposition~\ref{prop:equivariant-box-maps}, the fixed point functor
  $(F^\KhSpace_j(K))^\tau$ refines Burnside functor
  $\coprod_{\ol{\jmath},k\mid
    2\ol{\jmath}+k=j-1+3\Delta}\AF^\BurnsideCat_{\ol{\jmath},k}(\ol{K}_1,\ol{K}_0)$,
  and therefore, up to some (de)suspension, the homotopy colimit of
  $(F^\KhSpace_j(K))^\tau_+$ is simply
  $\bigvee_{\ol{\jmath},k\mid
    2\ol{\jmath}+k=j-1+3\Delta}\AKhSpace_{\ol{\jmath},k}(\ol{K}_1,\ol{K}_0)$,
  as claimed.
\end{proof}

\begin{proof}[Proof of Theorem~\ref{thm:main}]
  Since the spectral sequence is induced by the $\theta$-filtration on the Tate
  complex, Formula~(\ref{eq:tate}), we only need to prove
  Item~\ref{item:main-thm-e-infty}: the other
  parts are immediate from the definition. However, this is simply the
  classical Smith inequality applied to the
  Proposition~\ref{prop:main-fixed-pts}, stated in the 
  language of spectral sequences. To wit, the Tate complex from
  Equation~\eqref{eq:tate} computes the localized equivariant homology
  of $\KhSpace(K)$, which by the classical localization theorem,
  equals the localized equivariant homology of the geometric fixed
  point set $\AKhSpace(\ol{K}_1,\ol{K}_0)$, which simply equals its
  homology $\AKh(\ol{K}_1,\ol{K}_0)$, tensored with
  $\FF_2[\theta^{-1},\theta]$. (It is also easy to keep track of the
  quantum gradings using Proposition~\ref{prop:main-fixed-pts}.)
\end{proof}

\begin{remark}\label{rem:gr-shift}
  The expression $\Delta=N_--2\ol{N}_-$ appears as grading shifts in
  Theorem~\ref{thm:main}, but it is not an invariant of the knot $K$
  and its strong inversion. Geometrically, the 2-periodic annular
  links $K_0,K_1$ obtained by resolving the crossing of $K$ on the
  axis, and their quotient annular knots $\ol{K}_0,\ol{K}_1$, are only
  well-defined up to how many times they wind around the axis, and
  $\Delta$ captures information about this winding
  number. In more detail, if $B$ (respectively $T$) denotes the
  underpass (respectively overpass) of $K$ near its crossing on the
  axis, then orient the quotient knot $\ol{K}$ by orienting the
  quotient arc $\ol{B}$ (respectively $\ol{T}$) towards (respectively
  away from) the axis. This induces orientations of the two annular
  knots $\ol{K}_0$ and $\ol{K}_1$, as well as their pre-images
  $K_0$ and $K_1$ (but not of the original knot $K$). Let
  $W$ be the winding number of $K_0$ (equivalently, $\ol{K}_0$) around
  the axis; this is one higher than the winding number of $K_1$
  (equivalently, $\ol{K}_1$) around the axis. Then $W-\Delta$ is an
  invariant of the knot $K$ and its strong inversion; we prove this as
  Proposition~\ref{prop:classical-invt} below.
\end{remark}

\begin{proposition}\label{prop:classical-invt}
  The quantity $W-\Delta$ is independent of the choice of the
  intravergent diagram, and in fact equals twice the axis linking number
  invariant~\cite[Definition~4.6]{BI-top-invert-g4} of the knot $K$
  and its strong inversion.
\end{proposition}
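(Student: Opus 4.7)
The plan has two parts: first, show that $W-\Delta$ is independent of the chosen intravergent diagram by checking invariance under equivariant Reidemeister moves; second, compute $W-\Delta$ in a convenient form and compare with the definition of the axis linking number.

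For invariance, I would use a list of equivariant Reidemeister moves for intravergent diagrams (e.g.~from~\cite{BI-top-invert-g4} or~\cite{Watson-kh-invert}), which naturally splits into two families. The first family consists of pairs of ordinary Reidemeister moves performed symmetrically in disks disjoint from the axis. For such a move, any change $\delta\ol{N}_-$ in the negative crossings of the quotient is exactly doubled in $K$, so $\delta N_-=2\delta\ol{N}_-$ and hence $\delta\Delta=0$; and since the move takes place away from the axis, the annular position of $\ol{K}_0$ is unaffected, so $\delta W=0$. The second family consists of moves in a neighborhood of the axis: these include the $\tau$-equivariant $R_1$ move at an axis endpoint (which flips the axis crossing), symmetric $R_2/R_3$ moves involving the axis, and a strand crossing the axis. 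For each such move one checks directly that the change in $W$ exactly cancels the change in $\Delta$; the typical balance is that a move that alters the axis crossing contributes $\pm 1$ to $N_-$ (through the axis crossing itself) and simultaneously toggles $W$ by $\pm 1$ (since the new crossing changes which resolution winds around the axis), while leaving $\ol{N}_-$ unchanged.

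For the identification, I would interpret $W$ geometrically: the winding number $W$ equals the signed count of intersections of a generic radial arc from the axis puncture to infinity with the diagram of $\ol{K}_0$, which in turn lifts to a signed count on the $2$-periodic link $K_0$. This signed count can be rewritten, using the resolution of the axis crossing and the pairing of non-axis crossings of $K$ under $\tau$, as an expression involving $\mathrm{lk}(K,A_+)$ for a parallel copy $A_+$ of one of the two axis arcs, plus a correction from the crossings which is precisely $\Delta$. After this rearrangement, $W-\Delta$ reduces to the linking number of $K$ with the chosen axis arc, and the Boyle--Issa axis linking number is, by definition~\cite[Definition 4.6]{BI-top-invert-g4}, exactly half of this quantity.

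The main obstacle is the case-by-case analysis in the second family of equivariant moves, specifically those that either modify the axis crossing or push a strand past the axis: one has to verify carefully, with pictures and orientation conventions, that the simultaneous changes in $W$, $N_-$, and $\ol{N}_-$ cancel. A secondary but essentially bookkeeping issue is matching the orientation conventions for $K_0$ and $\ol{K}_0$ specified in Remark~\ref{rem:gr-shift} with the conventions used to define $\eta(K,\tau)$ in~\cite{BI-top-invert-g4}, so that the factor of $2$ in the statement is correct rather than off by a sign or a factor.
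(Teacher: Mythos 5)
Your high-level strategy (invariance under moves, then identification with Boyle--Issa) matches the paper's, but the execution diverges in ways that create genuine gaps. For invariance, the paper does not start from a pre-existing catalogue of equivariant moves. Instead it connects two intravergent diagrams by a generic $\ZZ/2$-equivariant isotopy of $K$ in $\RR^3$, passes to the quotient, and applies Reidemeister's theorem there: the conclusion is that there is exactly \emph{one} new move beyond the ordinary Reidemeister moves on $\ol{K}$ away from the axis, namely the event where the projections of the overpass $T$ and the underpass $B$ become tangent at the axis (Figure~\ref{fig:intravergent-moves}). Your hypothesized family of ``axis'' moves is both broader than necessary and mis-counted. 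In particular, your ``typical balance'' claims $\ol{N}_-$ is unchanged by an axis move, but for the actual new move the quotient knot $\ol{K}$ \emph{does} gain a crossing (between the quotient arc and the axis arc of $\ol{K}$), so $\ol{N}_-$ changes by $1$; the paper's bookkeeping is $\delta N_-=+1$, $\delta\ol{N}_-=+1$, $\delta\Delta=-1$, $\delta W=-1$. Your bookkeeping $\delta N_-=+1$, $\delta\ol{N}_-=0$, $\delta W=+1$ happens to also give $\delta(W-\Delta)=0$, but it does not describe the move that actually occurs, and there is no separate ``flip the axis crossing'' move (changing the sign of a crossing is not an isotopy). You flagged this as the main obstacle; the way to close it is exactly the paper's derivation of the move list from the equivariant isotopy.

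For the identification with the axis linking number, the paper uses a cleaner route than the one you sketch. It first observes that, after orienting $K$ and arranging the axis crossing to be positive, $\Delta$ is the difference between the negative-crossing counts of the $2$-component link $K_0$ under its two natural orientations ($o_{\mathrm{ind}}$ from $K$ versus $o_{\mathrm{can}}$ from Remark~\ref{rem:gr-shift}); this identifies $\Delta$ with twice the linking number of the two components of $K_0$, so in particular $\Delta$ is even. It then applies the new move $\Delta$ times to reduce to a diagram with $\Delta=0$, where $K_0$ is precisely Boyle--Issa's butterfly link and $W$ is, by their definition, twice the axis linking number. Your proposal to rewrite $W$ directly as a radial intersection count and massage it into $\mathrm{lk}(K,A_+)+\Delta$ could conceivably be made to work, but it skips the key structural fact (that $\Delta$ is even and can be normalized away by the new move) and would require a separate, careful argument that your intersection-count identity holds and matches Boyle--Issa's conventions with the right factor of $2$. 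As written, both halves of your proposal rely on case checks you have not actually carried out, and in the one place you do commit to a count ($\ol{N}_-$ unchanged) it disagrees with what happens.
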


\begin{proof}
  We first prove that $W-\Delta$ is an invariant. Given two
  intravergent diagrams for $K$ and its strong inversion, connect them
  by a generic $\ZZ/2$-equivariant isotopy in $\RR^3$; this produces a
  generic isotopy (in $\RR^3$) connecting the quotient diagrams for
  $\ol{K}$. As in the proof of Reidemeister's theorem, this implies
  that the two diagrams for $\ol{K}$ are related by a finite sequence of
  Reidemeister moves (away from the axis), as well as a new move,
  corresponding to the situation when during the isotopy of $K$, the
  projection of the underpass $B$ becomes tangent to the projection of
  the overpass $T$ at the axis. The Reidemeister moves of $\ol{K}$
  lift to $\ZZ/2$-equivariant pairs of Reidemeister moves for $K$,
  while this new move lifts to the move shown in the left half of
  Figure~\ref{fig:intravergent-moves}. (Actually, there are two moves,
  depending on whether the overpass $T$ rotates clockwise or
  counter-clockwise over $B$. Figure~\ref{fig:intravergent-moves}
  shows the move for the counter-clockwise rotation; the other move
  can be obtained by performing a $\ZZ/2$-equivariant pair of
  Reidemeister II moves of $T$ over $B$ near the axis, and then the
  above move in reverse.)

  \begin{figure}
    \centering
    \begin{tikzpicture}[scale=0.5]
      \foreach \i in {0,1}{
        \foreach \j in {0,1}{

          \begin{scope}[xshift=15*\j cm +7*\i cm]
            \coordinate (w) at (-2,0);
            \coordinate (e) at (2,0);
            \coordinate (n) at (0,2);
            \coordinate (s) at (0,-2);
            
            \coordinate (ww) at (-1,0);
            \coordinate (ee) at (1,0);
            \coordinate (nn) at (0,1);
            \coordinate (ss) at (0,-1);
            
            \coordinate (sw) at (-0.5,-0.8);
            \coordinate (ne) at (0.5,0.8);
            \coordinate (o) at (0,0);
            
            \ifnum\i=0

            \draw[->] (3,0)--(4,0);

            \ifnum\j=0
            \draw[knot] (w)--(e);
            \draw[knot] (n)--(s);
            \else
            \draw[knot] (w)--(ww) to[out=0,in=90] (ss);
            \draw[knot,-latex](ss) -- (s);
            \draw[knot] (e) -- (ee) to[out=180,in=-90] (nn);
            \draw[knot,-latex] (nn)--(n);
            \fi

            \else

            \ifnum\j=0
            \draw[knot] (w)--(e);
            \draw[knot] (n) to[out=-90,in=90] (ww) to[out=-90,in=180] (sw) to[out=0,in=-90] (o) to[out=90,in=180] (ne) to[out=0,in=90] (ee) to[out=-90,in=90] (s);
            \else
            \draw[thick] (w)--(ww) to[out=0,in=0,looseness=2] (sw);
            \draw[knot] (sw) to[out=180,in=-90] (ww);
            \draw[knot,-latex](ww) to[out=90,in=-90] (n);
            \draw[thick] (e)--(ee) to[out=180,in=180,looseness=2] (ne);
            \draw[knot] (ne) to[out=0,in=90] (ee);
            \draw[knot,-latex](ee) to[out=-90,in=90] (s);
            \fi
                        
            \fi
            
          \end{scope}
          
        }}
      
    \end{tikzpicture}
    \caption{\textbf{An additional move.} The left half shows a new
      move for intravergent diagrams, corresponding to rotating the
      overpass $T$ counter-clockwise over the underpass $B$. The right
      half shows the corresponding change for the 2-periodic link
      $K_0$ (with the orientation from
      Remark~\ref{rem:gr-shift}).}\label{fig:intravergent-moves}
  \end{figure}
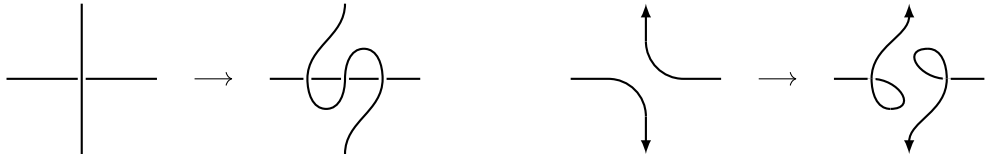

  The Reidemeister moves for $\ol{K}$---lifting to a
  $\ZZ/2$-equivariant pair of Reidemeister moves for $K$---do not
  change $W$, the winding number of $K_0$. For the Reidemeister I
  move, depending on the shape of the clasp, either $N_-$ increases by
  2 and $\ol{N}_-$ increases by 1, or both $N_-$ and $\ol{N}_-$ are
  unchanged. For the Reidemeister II move, $N_-$ increases by 2 and
  $\ol{N}_-$ increases by 1, and for the Reidemeister III move, both
  $N_-$ and $\ol{N}_-$ are unchanged. So, in each case
  $\Delta=N_--2\ol{N}_-$ does not change.  Finally, for the new move
  from Figure~\ref{fig:intravergent-moves}, $N_-$ increases by 1 and
  $\ol{N}_-$ also increases by 1, so $\Delta$ decreases by 1, but the
  winding number $W$ decreases by 1, so the quantity $W-\Delta$ is
  preserved.

  Next we will prove that this invariant $W-\Delta$ equals twice Boyle-Issa's
  axis linking number invariant~\cite[Definition~4.6]{BI-top-invert-g4}. Fix an orientation of
  the knot $K$. By performing the move from
  Figure~\ref{fig:intravergent-moves} once if necessary, we may assume
  the crossing of $K$ on the axis is a positive crossing. Then $K_0$
  is a 2-component link, and it inherits an orientation from $K$. To
  avoid confusion, let $o_{\mathrm{can}}$ denote the canonical
  orientation of $K_0$ from Remark~\ref{rem:gr-shift}, and let
  $o_{\mathrm{ind}}$ denote the induced orientation from $K$. These
  two orientations agree on one of the components of $K_0$, and
  disagree on the other. 

  The number of negative crossings of $K_0$ with orientation
  $o_{\mathrm{ind}}$ is $N_-$ and the number of the negative
  crossings of $K_0$ with orientation $o_{\mathrm{can}}$ is
  $2\ol{N}_-$. Therefore, $\Delta=N_--2\ol{N}_-$ is twice the
  linking number between the two components of $K_0$ (with orientation
  $o_{\mathrm{can}}$); in particular, it is an even number.

  Now perform the move from Figure~\ref{fig:intravergent-moves}
  $\Delta$ times. (If $\Delta<0$, then perform the reverse move
  $-\Delta$ times.) In the new diagram, the crossing on the axis is
  still positive, so the above discussion applies. Now $\Delta=0$, and
  so the invariant is simply the new winding number $W$. Also, the
  linking number between the two components of $K_0$ in the new
  diagram is zero, so $K_0$ is the 2-component butterfly
  link~\cite[Definition~4.1]{BI-top-invert-g4}, and by definition its
  winding number $W$ is twice the axis linking number invariant.
\end{proof}

\begin{remark}
  Given a theorem about Khovanov homology, it is natural to wonder if
  it lifts a result about the Jones polynomial.  Let $V_K(q)$ be the
  unreduced Jones polynomial, that is, the graded Euler characteristic
  of $\Kh(K)$. Let $J_K(q)=V_K(q)/(q+q^{-1})$ denote the reduced Jones
  polynomial. For an annular knot $K$, let $\AV_K(q,a)$ be the graded
  Euler characteristic of annular Khovanov homology, which was studied briefly by Roberts~\cite[Section
  2]{Roberts-kh-dcov}.
  By Theorem~\ref{thm:main},
  \[
  V_K(q)\equiv q^{1-3\Delta}(q\AV_{\ol{K}_1}(q^2,q)+\AV_{\ol{K}_0}(q^2,q))
  \pmod{2}.    
  \]
  It is easy to see from Kauffman's state sum formula that
  if we quotient by $(q^2+q^{-2})-(q+q^{-1})$ then $\AV(q^2,q)\equiv
  V(q^2)$ and   
  $V(q)\equiv V(q^2)$. Therefore, we have
  \[
  V_K(q)\equiv q^{-3\Delta}(q^2+q)V_{\ol{K}}(q^2) \equiv q^{-3\Delta}(q^2+q)V_{\ol{K}}(q)
  \pmod{2, q^2-q-q^{-1}+q^{-2}},
  \]
  where $\ol{K}$ denotes either $\ol{K}_0$ or $\ol{K}_1$, viewed as an
  ordinary, not annular, knot. Since
  $q^2-q-q^{-1}+q^{-2}=(q+q^{-1})(q-1+q^{-1})$ over $\FF_2[q^{-1},q]$,
  we may divide by $(q+q^{-1})$ to get the equation for reduced Jones
  polynomial
  \begin{equation}\label{eq:vacuous}
    J_K(q)\equiv q^{-3\Delta}(q^2+q)J_{\ol{K}}(q^2)\equiv q^{-3\Delta}(q^2+q)J_{\ol{K}}(q)\equiv J_{\ol{K}}(q)\pmod{2,q-1+q^{-1}}.
  \end{equation}
  An analogous result can also be obtained for 2-periodic knots using
  \cite[Theorem 1.3]{SZ-kh-localization}, giving the 2-periodic case
  of a formula of Murasugi's~\cite[Theorem
  1]{Murasugi-knot-per-Jones} and, using the fact that $J_K(i)\equiv
  1\pmod{2}$, the 2-periodic case of Yokota's refinement~\cite[Theorem
  2]{Yokota-knot-periodic}. However, Formula~\eqref{eq:vacuous} is actually
  vacuous, since if we quotient by $(q+q^{-1})-1$, in Kauffman's
  state sum formula each circle contributes $1$, and so for any knot
  or link diagram $K$ with $N$ crossings, $N_-$ of which are negative, we get
  \[
    J_K(q)\equiv \sum_{v\in\{0,1\}^N}q^{N+|v|-3N_-}=(1+q)^Nq^Nq^{-3N_-}\equiv
    1\pmod{2,q-1+q^{-1}}.
  \]
  (Murasugi's and Yokota's formulas are also vacuous for 2-periodic
  knots, though interesting for higher periods. For Murasugi, this
  is~\cite[Proposition 7]{Murasugi-knot-per-Jones}; Yokota only states
  his results for odd primes, presumably for this reason.)
\end{remark}

\section{An application to slice disks}\label{sec:slice}

\begin{figure}
  \centering
  \begin{tikzpicture}

    \node[circle, outer sep=1.5cm] (main0) at (0,0) {};
    \node[circle, outer sep=1.5cm] (main1) at (30:4.5) {};
    \node[circle, outer sep=1.5cm] (main2) at (-30:4.5) {};

    \path (main1) ++(3,0) node[outer sep=0.5cm] (main3) {} ++(2,0) node[outer sep=0.5cm] (main5) {};
    \path (main2) ++(3,0) node[outer sep=0.5cm] (main4) {} ++(2,0) node[outer sep=0.5cm] (main6) {};
    
    \foreach \c in {3,4}{
      
      \node at (main\c) {\begin{tikzpicture}[scale=0.4]

          \draw[knot] (0,0) circle (1);
          \draw[knot] (0,2.5) circle (1);
          
        \end{tikzpicture}};}

    \foreach \c in {5,6}{
      \node at  (main\c) {$\varnothing$};}
    
    \foreach \c in {0,1,2}{
      \node[tighternode] at (main\c) {\begin{tikzpicture}[scale=0.4]

          \ifnum\c=0
          \draw[dashed] (0,-1)--(0,6);
          \fi
          
          \node[tighternode] at (-4.5,0) {};
          \node[tighternode] at (4.5,0) {};

          \coordinate (b) at (0,5.5);
          \coordinate (t) at (0,4.5);
          
          \foreach \a in {0,1,2,3}{
            \foreach \b [count=\bi from 0] in {-2.5,-1.5,-0.5,0.5,1.5,2.5}{

              \coordinate (m\bi\a) at (\b,\a);

            }}

          \draw[knot] (m01) to[out=-90,in=180] ($(m10)!0.5!(m20)$) to[out=0,in=-90] (m31);
          \draw[knot] (m50) to[out=180,in=-90] (m41);

          \draw[knot] (m50) to[out=0,in=0,looseness=1.1] (b) to[out=180,in=180,looseness=1.1] (m00) to[out=0,in=-90] (m11);
          \draw[knot] (m21) to[out=-90,in=180] ($(m30)!0.5!(m40)$) to[out=0,in=-90] (m51);
          
          \draw[knot] (m11) to[out=90,in=-90] (m02);
          \draw[knot] (m21) to[out=90,in=-90] (m32);
          \draw[knot] (m51) to[out=90,in=-90] (m42);

          \draw[knot] (m01) to[out=90,in=-90] (m12);
          \draw[knot] (m31) to[out=90,in=-90] (m22);
          \draw[knot] (m41) to[out=90,in=-90] (m52);

          \ifnum\c=0
          \draw[knot] (m12) to[out=90,in=-90] (m03) to[out=90,in=180] (t);
          \draw[knot] (m22) to[out=90,in=180] ($(m33)!0.5!(m43)$) to[out=0,in=90] (m52);
          \draw[knot] (m02) to[out=90,in=180] ($(m13)!0.5!(m23)$) to[out=0,in=90] (m32);
          \draw[knot] (m42) to[out=90,in=-90] (m53) to[out=90,in=0] (t);
          \fi

          \ifnum\c=1
          \draw[knot] (m12) to[out=90,in=-90] (m03) to[out=90,in=150] ($(m33)+(0,1.3)$) to[out=-30,in=0] (m33);
          \draw[knot] (m22) to[out=90,in=180] (m33) (m43) to[out=0,in=90] (m52);
          \draw[knot] (m02) to[out=90,in=180] ($(m13)!0.5!(m23)$) to[out=0,in=90] (m32);
          \draw[knot] (m42) to[out=90,in=-90] (m53) to[out=90,in=0] ($(m43)+(0,1.3)$) to[out=180,in=180] (m43);
          \fi

          \ifnum\c=2
          \draw[knot] (m12) to[out=90,in=-90] (m03) to[out=90,in=180] ($(m13)+(0,1.3)$) to[out=0,in=0] (m13);
          \draw[knot] (m22) to[out=90,in=180] ($(m33)!0.5!(m43)$) to[out=0,in=90] (m52);
          \draw[knot] (m02) to[out=90,in=180] (m13) (m23) to[out=0,in=90] (m32);
          \draw[knot] (m42) to[out=90,in=-90] (m53) to[out=90,in=30] ($(m23)+(0,1.3)$) to[out=-150,in=180] (m23);
          \fi

        \end{tikzpicture}};}

    \draw[-latex] (main0)--(main1) node[pos=0.5,anchor=south,rotate=30] {\tiny saddle};
    \draw[-latex] (main0)--(main2) node[pos=0.5,anchor=north,rotate=-30] {\tiny saddle};

    \draw[-latex] (main1)--(main3) node[midway,anchor=south] {\tiny isotopy};
    \draw[-latex] (main2)--(main4) node[midway,anchor=north] {\tiny isotopy};

    \draw[-latex] (main3)--(main5) node[midway,anchor=south] {\tiny deaths};
    \draw[-latex] (main4)--(main6) node[midway,anchor=north] {\tiny deaths};
    
    \end{tikzpicture}        
    \caption{\textbf{The knot $K=9_{46}$ and a pair of slice disks for
        it.}  The knot is on the left, and the two movies on the two
      rows represent its two slice disks. Note that the two movies are
      related by a $180^\circ$ rotation around the dashed line.}
  \label{fig:9-46-slices}
\end{figure}
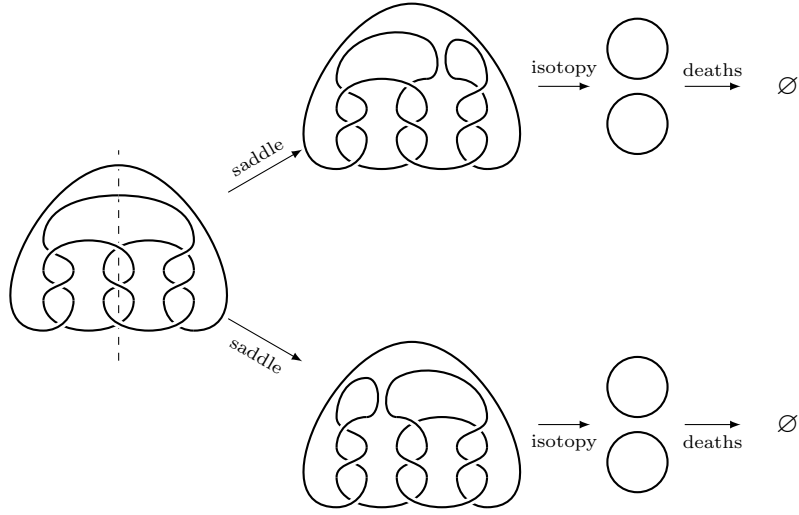

Consider the knot $K=9_{46}$. It bounds two slice disks as illustrated
in Figure~\ref{fig:9-46-slices}; denote them $D_1$ and $D_2$, and view
them as cobordisms in $[0,1]\times\RR^3$ from $K$ to the empty link
$\varnothing$. Let $\widehat{D}_i$
denote the image of $D_i$ under the map $(t,x,y,z)\mapsto (1-t,x,y,z)$, 
so $\widehat{D}_i$ is a cobordism from $\varnothing$ to
$K$. Sundberg-Swann showed that the disks $D_1$ and $D_2$ are
distinguished by their induced maps on Khovanov homology. We will
recover this result using Theorem~\ref{thm:main}. In fact, we get a
little more; see Porism~\ref{porism} below. The argument is
reminiscent of the recent work of Dai-Mallick-Stoffregen using
Heegaard Floer homology~\cite{DMS-hf-invert}.

\begin{theorem}\cite{SS-kh-surf}\label{thm:slice}
  The slice disks $D_1$ and $D_2$ induce different maps on Khovanov
  homology $\Kh(9_{46};\FF_2)\to\Kh(\varnothing;\FF_2)=\FF_2$.
\end{theorem}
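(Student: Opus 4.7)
The strong inversion $\tau$ exchanges $D_1$ and $D_2$ and acts trivially on $\varnothing$, so functoriality of Khovanov homology gives $(D_2)_* = (D_1)_*\circ\tau_*$. Over $\FF_2$ this rewrites as
\[
  (D_1)_* + (D_2)_* \;=\; (D_1)_*\circ(\Id+\tau_*),
\]
so the two disk maps agree if and only if $(D_1)_*$ vanishes on $\image(\Id+\tau_*)\subset\Kh(9_{46};\FF_2)$. A slice-disk cobordism shifts quantum grading by $-\chi=-1$, so it suffices to work inside $\Kh_{0,1}(9_{46};\FF_2)$, and the plan is to apply Corollary~\ref{cor:comp-tau} to pin down $\tau_*$ there and then use standard properties of slice-disk maps to rule out this vanishing.

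From published Khovanov computations, $\Kh_{*,1}(9_{46};\FF_2)$ is concentrated in homological grading $0$ and has total dimension $2$. Fix an intravergent diagram of $9_{46}$, identify its annular quotients $\ol{K}_0,\ol{K}_1$ obtained by $0$- and $1$-resolving the axial crossing, and verify by a finite bookkeeping computation that
\[
  \bigoplus_{\ol{\imath},\ol{\jmath},k\,:\,2\ol{\jmath}+k=3\Delta}\AKh_{\ol{\imath},\ol{\jmath},k}(\ol{K}_1,\ol{K}_0;\FF_2)=0,
\]
by directly evaluating the mapping cone $\Cone(f^+)$ on these annular diagrams. Corollary~\ref{cor:comp-tau} then produces a basis $\{e_1,e_2\}$ of $\Kh_{0,1}(9_{46};\FF_2)$ in which $\tau_*$ is the swap $\left(\begin{smallmatrix}0&1\\1&0\end{smallmatrix}\right)$, so $\image(\Id+\tau_*)$ is the unique one-dimensional $\tau_*$-fixed subspace $\langle e_1+e_2\rangle$.

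To finish we must produce one element of $\langle e_1+e_2\rangle$ on which $(D_1)_*$ is nonzero, without explicitly computing $(D_1)_*$ or $(D_2)_*$ as a whole. The natural identification is $e_i=\mathfrak{s}_{D_i}$, the Bar-Natan--Lee canonical class associated to the orientation $o_i$ of $9_{46}$ induced by $D_i$; since $\tau$ reverses knot orientation, $\tau_*(\mathfrak{s}_{D_1})=\mathfrak{s}_{D_2}$, matching the basis from Corollary~\ref{cor:comp-tau}. The general functoriality of cobordism maps on canonical classes gives $(D_i)_*(\mathfrak{s}_{D_i})=1$ while $(D_i)_*(\mathfrak{s}_{D_j})=0$ for $j\ne i$, so $(D_1)_*(e_1+e_2)=1\ne0$ and consequently $(D_1)_*\ne (D_2)_*$.

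The main obstacle is the annular computation in Step~2: choosing an explicit intravergent diagram for $9_{46}$, reading off $\ol{K}_0,\ol{K}_1$, computing $\Delta$, and then checking vanishing of the mapping cone $\Cone(f^+)$ in the tridegrees with $2\ol{\jmath}+k=3\Delta$. A secondary subtlety is verifying the Bar-Natan canonical-class formulas over $\FF_2$, since some identifications between $\mathfrak{s}_o$ and $\mathfrak{s}_{-o}$ that hold rationally can collapse in characteristic $2$; this must be checked carefully to ensure the two classes give linearly independent vectors in $\Kh_{0,1}(9_{46};\FF_2)$ rather than a single class fixed by $\tau_*$.
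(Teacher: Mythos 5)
Your reduction to the claim ``$(D_1)_*$ is nonzero on $\image(\Id+\tau_*)\subset\Kh_{0,1}(9_{46};\FF_2)$'' is exactly the logical skeleton of the paper's proof, and the annular computation plus Corollary~\ref{cor:comp-tau} is used identically. The divergence --- and the gap --- is in how you identify a basis on which to evaluate $(D_1)_*$.

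The identification $e_i=\mathfrak{s}_{D_i}$ does not make sense as stated: the Lee/Bar--Natan canonical classes are elements of the \emph{deformed} homology theory (Lee over $\QQ$, or the Bar--Natan deformation over $\FF_2$), not of $\Kh_{0,1}(9_{46};\FF_2)$. What one gets inside Khovanov homology is only the image of these classes in the associated graded of the canceling spectral sequence, and for a slice knot like $9_{46}$ (with $s=0$) the lowest-filtration Lee class projects to quantum grading $-1$, not $+1$. So the grading does not even match the subspace you need. Similarly, the formula $(D_i)_*(\mathfrak{s}_{D_j})=\delta_{ij}$ is a statement in Lee/Bar--Natan homology and transporting it to $\Kh(\cdot;\FF_2)$ requires a nontrivial argument. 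You flagged a ``secondary subtlety'' about characteristic~$2$, but the issue is more basic: you have not actually produced two elements of $\Kh_{0,1}(9_{46};\FF_2)$.

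The paper avoids this by staying entirely inside Khovanov homology: it defines $\alpha\in\Kh_{0,-1}$ as the kernel of the Bockstein, sets $\beta=X\cdot\alpha\in\Kh_{0,1}$ via the basepoint action, uses the Rasmussen--Tanaka dotted-sphere relation to conclude that both $(\widehat D_i)_*$ hit $\alpha$ (hence both $(D_i)_*$ send $\beta$ to $1$), and uses the relation $\tau_*\circ\Id_*^\bullet\circ(\widehat D_1)_*=\Id_*^\bullet\circ(\widehat D_2)_*$ to see that $\beta$ is $\tau_*$-fixed. Together with Corollary~\ref{cor:comp-tau} this forces $\tau_*(\gamma)=\gamma+\beta$ for a complementary $\gamma$, and the conclusion follows. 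To repair your argument you would need to replace the canonical-class step with something honest in $\Kh(\cdot;\FF_2)$; the Bockstein/basepoint/dotted-sphere package is the natural choice.
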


\begin{proof}
  For any cobordism $F$, let $F_*$ denote the induced map on Khovanov
  homology. We will find an element $\gamma\in\Kh_{0,1}(K;\FF_2)$
  satisfying $(D_1)_*(\gamma)\neq (D_2)_*(\gamma)$.
  
  The Khovanov homology of $K$ in quantum grading $\pm 1$,
  retrieved from the Knot Atlas~\cite{KAT-kh-knotatlas} and converted
  to the conventions of Section~\ref{sec:conventions}, is
\[
\begin{tikzpicture}[xscale=1.2,yscale=0.35]
  \foreach \i in {-1.5,-0.5,0.5}{
    \draw (\i,-2)--(\i,2);}
  \foreach \j in {-2,0,2}{
    \draw (-1.5,\j)--(0.5,\j);}
  \foreach \i in {-1,0}{
    \node[anchor=south] at (\i,2) {$\i$};}
  \foreach \j in {-1,1}{
    \node[anchor=east] at (-1.5,\j) {$\j$};}
  \foreach \i/\j/\g in {
    -1/-1/\FF_2,
    0/-1/\ZZ,
    0/1/\ZZ^2
  }{
    \node at (\i,\j) {$\g$};
    }
\end{tikzpicture}
\]
and therefore the Khovanov homology  over $\FF_2$ of $K$ in these
quantum gradings is
\[
\begin{tikzpicture}[xscale=1.2,yscale=0.35]
  \foreach \i in {-1.5,-0.5,0.5}{
    \draw (\i,-2)--(\i,2);}
  \foreach \j in {-2,0,2}{
    \draw (-1.5,\j)--(0.5,\j);}
  \foreach \i in {-1,0}{
    \node[anchor=south] at (\i,2) {$\i$};}
  \foreach \j in {-1,1}{
    \node[anchor=east] at (-1.5,\j) {$\j$};}
  \foreach \i/\j/\g [count=\c from 1] in {
    -1/-1/\FF_2,
    0/-1/\FF_2^2,
    0/1/\FF_2^2
  }{
    \node (g\c) at (\i,\j) {$\g$};
  }
  \draw[thick,->] (g2)--(g1);
\end{tikzpicture}
\]
where the arrow indicates a rank one Bockstein homomorphism
associated to the coefficient sequence
$0\to\ZZ/2\to\ZZ/4\to\ZZ/2\to0$.

Let $\alpha\in\Kh_{0,-1}(K;\FF_2)$ be the generator of the kernel
of the Bockstein homomorphism, and let $\beta\in\Kh_{0,1}(K;\FF_2)$
be the image of $\alpha$ under the $X$-action (the basepoint
map). If $\Id^\bullet$ denotes the identity cobordism from $K$ to
itself decorated with a single dot, then this $X$-action is the map
$\Id^\bullet_*$ induced on Khovanov homology by $\Id^\bullet$.

For $i\in\{1,2\}$, consider the cobordism
$F_i={D}_i\circ\Id^\bullet\circ \widehat{D}_i$ from $\varnothing$ to
$\varnothing$. Since $F_i$ is a (knotted) dotted sphere,
by a result of Rasmussen and
Tanaka~\cite{Rasmussen-kh-closed,Tanaka-kh-closed} (or, more
precisely, a trivial extension of it~\cite[Lemma 6.16]{RS-mixed}),
$(F_i)_*=({D}_i)_*\circ\Id_*^\bullet\circ (\widehat{D}_i)_*$ is the
identity map on Khovanov homology
$\FF_2=\Kh(\varnothing;\FF_2)\to\Kh(\varnothing;\FF_2)=\FF_2$. Therefore,
both $(\widehat{D}_1)_*$ and $(\widehat{D}_2)_*$ map the generator of
$\Kh(\varnothing;\FF_2)$ (in bigrading $(0,0)$) to $\alpha$, which is
the unique non-zero element of $\Kh_{0,-1}(K;\FF_2)$ in the
kernel of the Bockstein. Therefore, both
$\Id^\bullet_*\circ(\widehat{D}_1)_*$ and $\Id^\bullet_*\circ(\widehat{D}_2)_*$
map the generator of $\Kh(\varnothing;\FF_2)$ to
$\beta\in\Kh_{0,1}(K;\FF_2)$, and both $(D_1)_*$ and $(D_2)_*$
map $\beta$ to the generator of $\Kh(\varnothing;\FF_2)$.
Choose some
$\gamma\in\Kh_{0,1}(K;\FF_2)$ so that $\{\beta,\gamma\}$ is a
basis of $\Kh_{0,1}(K;\FF_2)$.

\begin{figure}
  \centering
  \begin{tikzpicture}

    \foreach\k [count=\c from 0] in {K,\ol{K},\ol{K}_0,\ol{K}_1}{
      \node at (3.5*\c,0) {$\k$};}

    \foreach\rot/\x [count=\c from 0] in {0/0,180/0,0/1,0/2,0/3}{

      \begin{scope}[yshift=3 cm,xshift=3.5*\x cm,xscale=0.5,yscale=0.5,rotate=\rot]

        \foreach \a in {-3,-2,...,3}{
          \coordinate (m\a) at (\a,0);}

        \coordinate (s) at (0,-0.5);
        \coordinate (sw) at (-135:0.5);

        \foreach\a in {0,1}{
          \foreach\b [count=\bi from 0] in {-2,-3,-4}{
            \coordinate (n\a\bi) at (\a,\b);}}

        \draw[knot] (m-2) to[out=-90,in=-90] (m3);
        \draw[knot] (m-1) to[out=-90,in=-90] (m2);

        \draw[knot] (s) to[out=-90,in=90] (n00) to[out=-90,in=90] (n11);
        \draw[knot] (m1) to[out=-90,in=90] (n10) to[out=-90,in=90] (n01) to[out=-90,in=90] (n12);
        \draw[knot] (n11) to[out=-90,in=90] (n02) to[out=-90,in=-90] (m-3);

        \draw[knot] (n12) to[out=-90,in=-135,looseness=1.4] (sw);

        \ifnum\c<3
        \draw[knot] (s)--(m0);
        \ifnum\c<2
        \fill[white] (-180:0.2) arc (-180:0:0.2) to (-180:0.2);
        \fi
        \draw[resol] (sw)--(m0);
        \fi

        \ifnum\c>1
        \draw[knot] (m-3) to[out=90,in=90] (m3);
        \draw[knot] (m-2) to[out=90,in=90] (m2);
        \draw[knot] (m-1) to[out=90,in=90] (m1);
        \fi

        \ifnum\c=3
        \draw[knot] (sw) to[out=45,in=-90] (-0.3,0) to[out=90,in=90] (0.3,0) to[out=-90,in=90] (s);
        \fi

        \ifnum\c=4
        \draw[knot] (sw) to[out=45,in=90] (s);
        \fi
        
        \fill[black] (m0) circle (0.1);
        
      \end{scope}
    }
    
  \end{tikzpicture}
  \caption{\textbf{The strongly invertible knot $K=9_{46}$, its
      quotient knot $\ol{K}$, and the two induced annular trefoil
      knots $\ol{K}_0$ and $\ol{K}_1$.} Here $K$ has been redrawn
    as an intravergent diagram with the axis coming straight out of the
    page through the marked point; it is also the axis for the two
    annular trefoils.}
  \label{fig:9-46-quotients}
\end{figure}
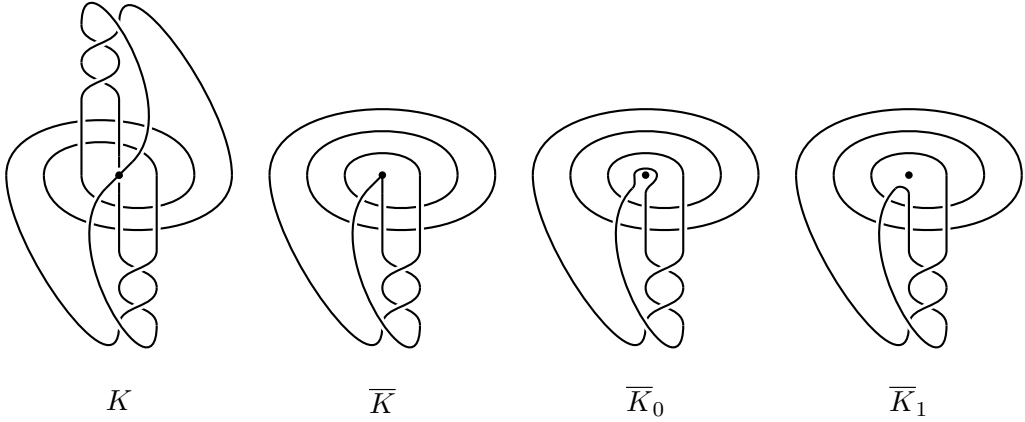

The knot $K$ is strongly invertible with respect to the
$180^\circ$ rotation around the dashed vertical line in
Figure~\ref{fig:9-46-slices}; call the involution $\tau$ and the
induced map on Khovanov homology
$\tau_*\from\Kh(K;\FF_2)\to\Kh(K;\FF_2)$. The two slice
disks $D_1$ and $D_2$ are related by the involution $(\Id,\tau)$ of
$[0,1]\times\RR^3$. Therefore,
$\tau_*\circ\Id^\bullet_*\circ (\widehat{D}_1)_*= \Id^\bullet_*\circ
(\widehat{D}_2)_*$, which implies $\tau_*(\beta)=\beta$.

The annular quotient knots $\ol{K}_0$ and $\ol{K}_1$ are shown in
Figure~\ref{fig:9-46-quotients}. We consider their annular Khovanov
homology in gradings corresponding to the quantum grading $j=1$ on $K$.  (The grading
correction term $\Delta=4$.)
Computer computation, using code by Davis~\cite{Champ-kh-AKh}, gives
that the annular Khovanov homology of $\ol{K}_1$ in gradings with
$2\ol{\jmath}+k+1=12$ is $\FF_2^2$, supported in gradings $(2,7,-3)$
and $(3,5,1)$, while the annular Khovanov homology of $\ol{K}_0$ in
gradings with $2\ol{\jmath}+k=12$ is also $\FF_2^2$, supported in
gradings $(2,7,-2)$ and $(3,5,2)$. It is not hard to find
representatives of these cycles in $\AKh(\ol{K}_1;\FF_2)$ by hand; see
Figure~\ref{fig:comp}. Their images under $f^+$ are (distinct)
nontrivial elements of $\AKh(\ol{K}_0;\FF_2)$, so $f^+$ is an
isomorphism. (Verifying that the image is nontrivial by hand is
straightforward for the cycle in grading $(3,5,1)$, but is quite
tedious for the cycle in grading $(2,7,-3)$, and might be better done
by computer.) Thus,
$\bigoplus_{\ol{\imath},\ol{\jmath},k\mid2\ol{\jmath}+k=1-1+3\Delta}
\AKh_{\ol{\imath},\ol{\jmath},k}(\ol{K}_1,\ol{K}_0;\FF_2)=0$ so, by
Corollary~\ref{cor:comp-tau} the map
$\tau_*\co\Kh_{0,1}(K;\FF_2)\to\Kh_{0,1}(K;\FF_2)$ is given by
$ \left(\begin{smallmatrix}
    0 & 1\\
    1 & 0
  \end{smallmatrix}\right)
  $ with respect to an appropriate basis.

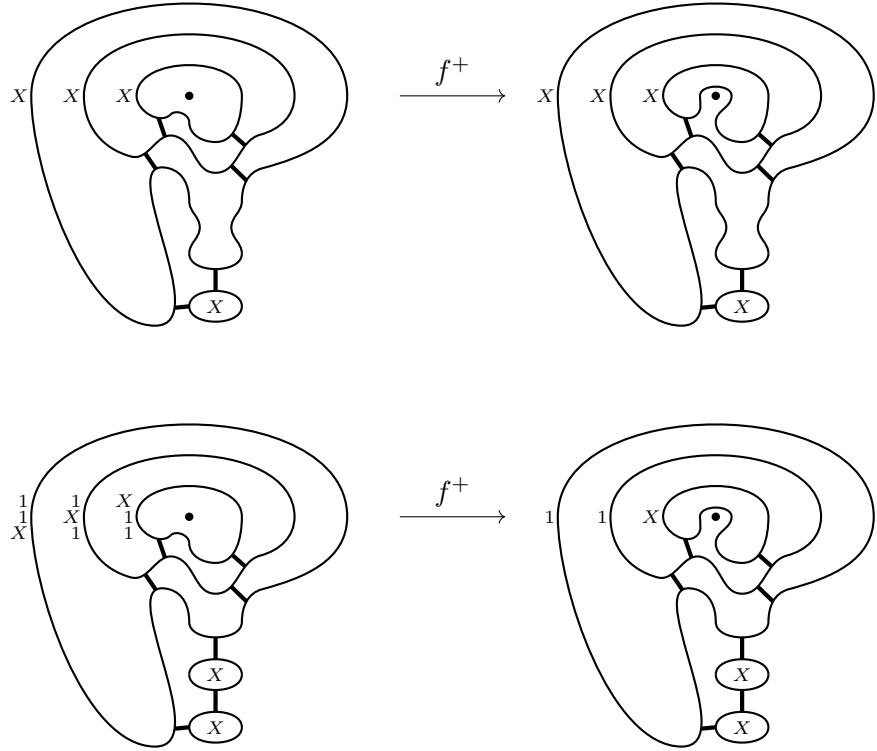
\begin{figure}
  \centering
  \begin{tikzpicture}[scale=0.7]

    \foreach\y in {0,1}{
    \foreach\x in {0,1}{

      \begin{scope}[yshift=8*\y cm,xshift=-10*\x cm,xscale=1,yscale=1]

        \coordinate (o\x) at (0,0);
        
        \foreach \a in {-3,-2,...,3}{
          \coordinate (m\a) at (\a,0);
        }

        \coordinate (s) at (0,-0.5);
        \coordinate (sw) at (-135:0.5);

        \foreach\a in {0,1}{
          \foreach\b [count=\bi from 0] in {-2,-3,-4}{
            \coordinate (n\a\bi) at (\a,\b);
          }}

        \path (m-2) to[out=-90,in=-90] coordinate[pos=0.27] (p0)
        coordinate[pos=0.38] (p1) coordinate[pos=0.5] (p2)
        coordinate[pos=0.62] (p3) (m3) ;

        \path (m-1) to[out=-90,in=-90] coordinate[pos=0.15] (q0)
        coordinate[pos=0.32] (q1) coordinate[pos=0.5] (q2)
        coordinate[pos=0.7] (q3) (m2);
        
        \foreach\a in {0,...,3}{
        }
        
        \path (n02) to[out=-90,in=-90] coordinate[pos=0.3] (n03) (m-3);
        
        \draw[thick] (m-2) to[out=-90,in=165] (p0) to[out=-15,in=165]
        coordinate[pos=0.3] (a3) coordinate[pos=0.8] (a4) (q1)
        to[out=-15,in=180] (p2) to[out=0,in=-165] coordinate[pos=0.3]
        (b5) coordinate[pos=0.7] (b6) (q3) to[out=15,in=-90] (m2)
        to[out=90,in=90] (m-2);

        \draw[thick] (m-3) to[out=-90,in=180,looseness=0.7] (n03) to[out=0,in=165] coordinate[pos=0.2] (b2)
        coordinate[pos=0.9] (b3) (p1) to[out=-15,in=90] (n00) (n10) to[out=90,in=-165] coordinate[pos=0.5] (a5) (p3)
        to[out=15,in=-90] (m3) to[out=90,in=90] (m-3);

        \draw[thick] (n12) to[out=-90,in=-90] (n02)
        to[out=90,in=90] coordinate[pos=0] (a2) coordinate[pos=0.5]
        (a1) (n12) (n11) to[out=-90,in=-90] coordinate[pos=0.5] (b1)
        (n01);

        \draw[thick] (s) to[out=-90,in=180] (q2) to[out=0,in=-90] coordinate[pos=0.35] (a6) (m1)
        to[out=90,in=90] (m-1) to[out=-90,in=-135] coordinate[pos=0.7] (b4) (sw);

        \ifnum\x=0
        \draw[thick] (sw) to[out=45,in=-90] (-0.3,0) to[out=90,in=90] (0.3,0) to[out=-90,in=90] (s);
        \else
        \draw[thick] (sw) to[out=45,in=90] (s);
        \fi

        \ifnum\y=0
        \draw[thick] (n00) to[out=-90,in=-90] coordinate[pos=0.5] (a0) (n10) (n01) to[out=90,in=90] coordinate[pos=0.5] (b0) (n11);
        \draw[ultra thick] (a0) -- (b0);
        \node at ($(n01)!0.5!(n11)$) {\tiny $X$};
        \else
        \draw[thick] (n00) to[out=-90,in=90] ($(n00)!0.5!(n01)+(0.2,0)$) to[out=-90,in=90] (n01) (n10) to[out=-90,in=90] ($(n10)!0.5!(n11)+(-0.2,0)$) to[out=-90,in=90] (n11);
        \fi

        \foreach \a in {1,...,6}{
          \draw[ultra thick] (a\a) -- (b\a);
        }
        
        \fill[black] (m0) circle (0.08);

        \node at ($(n02)!0.5!(n12)$) {\tiny $X$};

        \ifnum\y=1
        \foreach\a in {-3,-2,-1}{
          \node[anchor=east,tightnode] at (m\a) {\tiny $X$};
        }
        \else
        \ifnum\x=0
        \node[anchor=east,tightnode] at (m-3) {\tiny $1$};
        \node[anchor=east,tightnode] at (m-2) {\tiny $1$};        
        \node[anchor=east,tightnode] at (m-1) {\tiny $X$};
        \else
        \foreach \a in {-3,-2,-1}{
          \foreach \b in{-3,-2,-1}{
            \ifnum\a=\b
            \node[anchor=east,tightnode] at ($(m\a)+(0,0.6+0.3*\b)$) {\tiny $X$};
            \else
            \node[anchor=east,tightnode] at ($(m\a)+(0,0.6+0.3*\b)$) {\tiny $1$};
            \fi
          }}
        \fi
        \fi
        
      \end{scope}
    }
    \draw[->] ($(o1)+(4,0)$)--($(o0)+(-4,0)$) node[midway,anchor=south] {$f^+$};
  }
    
  \end{tikzpicture}  
  \caption{\textbf{Cycles in and their images under $f^+$.} Top left:
    a generator for $\AKh_{2,7,-3}(\ol{K}_1;\FF_2)$. Top right: its
    image in $\AKh_{2,7,-2}(\ol{K}_0;\FF_2)$ under $f^+$. Bottom left:
    an element of $\AKh_{3,5,1}(\ol{K}_1;\FF_2)$ which is the sum of
    three terms, where the two inessential circles and one essential
    circle are labeled $X$ and the other two essential circles are
    labeled $1$. (We have labeled the inessential circles and listed
    all possible labels of the three essential circles.) Bottom right:
    its image in $\AKh_{3,5,2}(\ol{K}_0;\FF_2)$. Crossings that were
    $1$-resolved, i.e., where there is a Khovanov differential out of this
    state, are indicated with thick line segments.}
  \label{fig:comp}
\end{figure}

Since $\tau_*(\beta)=\beta$, it follows that the basis must be
$\{\gamma,\gamma+\beta\}$ and
$\tau_*(\gamma)=\gamma+\beta$. So,
\[
  (D_1)_*(\gamma)+(D_2)_*(\gamma)=(D_1)_*(\gamma)+(D_1)_*(\tau_*(\gamma))=(D_1)_*(\beta)=1\in\FF_2=\Kh(\varnothing;\FF_2),
\]
and hence $(D_1)_*(\gamma)\neq (D_2)_*(\gamma)$, as claimed.
\end{proof}

\begin{porism}\label{porism}
  The knot $9_{46}$ does not admit an equivariant slice disk, with
  respect to the $\ZZ/2$-action on $[0,1]\times \RR^3$ by
  $(\Id,\tau)$. In fact, for any slice disk $D$ for $9_{46}$, $D$ and
  $(\Id,\tau)(D)$ are distinguished by the induced maps on Khovanov
  homology.
\end{porism}
\begin{proof}
  The proof is the same as the proof of Theorem~\ref{thm:slice}, with
  $D$ and $(\Id,\tau)(D)$ in place of $D_1$ and $D_2$. 
\end{proof}

Note that the first half of the porism also follows from Sakuma's work~\cite{Sakuma-top-invert}.

\begin{remark}
  For the last portion of the proof of Theorem~\ref{thm:slice}, one
  could instead compute the map $\tau_*$ directly; arguably, that
  involves less work than the argument given. On the other hand, the
  argument above only requires computing the dimensions of certain
  annular Khovanov homology groups, for which there is a well-known,
  fast divide-and-conquer algorithm~\cite{Bar-kh-fast}; computing the
  action of $\tau_*$ efficiently would require further conceptual
  work.
\end{remark}

\section{An analogue in Heegaard Floer homology}\label{sec:HF} 
The main ingredient in the proof of Theorem~\ref{thm:dcov} is a recent
localization theorem for the Heegaard Floer homology of branched
double covers:
\begin{theorem}\cite[Theorem 1.1]{HLL-hf-dcov}\label{thm:HLL}
  Let $Y$ be a closed $3$-manifold, $K\subset Y$ an oriented,
  nullhomologous knot with Seifert surface $F$, and $\spinc$ a
  $\SpinC$-structure on $Y$. Let $\pi\co \Sigma(Y,K)\to Y$ be the
  double cover branched along $K$ induced by the Seifert surface $F$
  and $\pi^*\spinc$ the pullback of $\spinc$ to $\Sigma(Y,K)$.  Then,
  there is a spectral sequence with $E^1$-page given by
  \begin{align*}
    \HFa(\Sigma(Y,K),\pi^*\spinc) &\otimes \FF_2[\theta^{-1}, \theta]]\\
 \shortintertext{converging to}
   \bigoplus_{\{\spinc'\mid \pi^*\spinc' = \pi^*\spinc\}} \HFa(Y,\spinc') &\otimes \FF_2[\theta^{-1}, \theta]].
  \end{align*}
\end{theorem}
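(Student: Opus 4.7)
The plan is to realize both Heegaard Floer groups as Lagrangian intersection Floer homologies in symmetric products of a Heegaard surface, arrange a $\ZZ/2$-equivariant setup in which the covering involution of $\Sigma(Y,K) \to Y$ acts, and then invoke the general equivariant localization theorem of Large for Lagrangian Floer homology.

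First, I would construct an equivariant Heegaard diagram. Start with a doubly-pointed Heegaard diagram $(\Sigma, \boldsymbol{\alpha}, \boldsymbol{\beta}, w, z)$ for $(Y, K)$ in which $w$ and $z$ lie in the same components of the complements of $\boldsymbol{\alpha}$ and of $\boldsymbol{\beta}$, so that $K$ is realized as the union of two arcs in the two handlebodies joining $w$ to $z$. The Seifert surface $F$ picks out a specific double branched cover; take the double cover $\pi \co \widetilde{\Sigma} \to \Sigma$ branched at $\{w, z\}$ and lift the attaching circles to obtain a pointed Heegaard diagram $(\widetilde{\Sigma}, \widetilde{\boldsymbol{\alpha}}, \widetilde{\boldsymbol{\beta}}, \widetilde{w})$ for $\Sigma(Y, K)$ carrying a holomorphic deck involution $\tau$.

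Second, I would arrange all Floer data $\tau$-equivariantly---a $\tau$-invariant almost complex structure on $\Sym^{g(\widetilde{\Sigma})}(\widetilde{\Sigma})$, together with any necessary equivariant Hamiltonian perturbations---so that $\tau$ induces a chain map $\tau_*$ on $\widehat{\CF}(\Sigma(Y,K), \pi^*\spinc)$. Large's localization theorem for Lagrangian Floer homology then produces a spectral sequence whose $E^1$-page is $\HFa(\Sigma(Y,K), \pi^*\spinc) \otimes \FF_2[\theta^{-1}, \theta]]$ with $d^1$-differential $\theta(\Id + \tau_*)$, and which converges to the Lagrangian Floer homology of the fixed-point Lagrangians inside the fixed locus $\mathrm{Fix}(\tau) \subset \Sym^{g(\widetilde{\Sigma})}(\widetilde{\Sigma})$.

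Third, I would identify this target. The relevant component of $\mathrm{Fix}(\tau)$ is identified via $\pi$ with $\Sym^{g(\Sigma)}(\Sigma)$, and the fixed parts of the Lagrangian tori $\mathbb{T}_{\widetilde{\boldsymbol{\alpha}}}$ and $\mathbb{T}_{\widetilde{\boldsymbol{\beta}}}$ become $\mathbb{T}_{\boldsymbol{\alpha}}$ and $\mathbb{T}_{\boldsymbol{\beta}}$, so the fixed-point Floer complex is the ordinary Heegaard Floer chain complex of $Y$. A $\tau$-fixed intersection point upstairs lies in the $\SpinC$-structure $\pi^* \spinc$, but its image downstairs may represent any $\spinc'$ with $\pi^* \spinc' = \pi^* \spinc$; hence fixing $\pi^* \spinc$ upstairs selects exactly the direct sum $\bigoplus_{\pi^* \spinc' = \pi^* \spinc} \HFa(Y, \spinc')$ downstairs, yielding the claimed $E^\infty$-page. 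The main obstacle is verifying the hypotheses of Large's theorem in this branched-cover setting: one must achieve $\tau$-equivariant transversality for the moduli of pseudo-holomorphic disks while simultaneously ensuring regularity for the restricted Floer theory on the fixed locus, and one must show that the $\tau$-fixed part of the upstairs Floer complex is chain-homotopic, after inverting $\theta$, to the Floer complex of the fixed-point Lagrangians computed in $\mathrm{Fix}(\tau)$ using the downstairs diagram. This is the technical heart of the argument; given it, the $\SpinC$-bookkeeping on the $E^\infty$-page is straightforward.
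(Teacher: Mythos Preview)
This theorem is not proved in the present paper at all: it is quoted verbatim as \cite[Theorem~1.1]{HLL-hf-dcov} and used as a black box in the proof of Theorem~\ref{thm:dcov}. So there is no ``paper's own proof'' to compare against; the paper simply cites the result, noting in the introduction that it ``in turn follows from a general localization theorem in Lagrangian intersection Floer homology of Large~\cite{Large-hf-loc}.''

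Your sketch is therefore really a sketch of the argument in the cited paper \cite{HLL-hf-dcov}, and at that level it is on the right track: the proof there does proceed by building a $\tau$-equivariant Heegaard diagram for $\Sigma(Y,K)$ whose quotient is a diagram for $Y$, and then applying Large's equivariant Floer localization machinery. Two points where your outline is too glib, though. First, the fixed set of $\tau$ on $\Sym^{g(\widetilde\Sigma)}(\widetilde\Sigma)$ is not a single copy of $\Sym^{g(\Sigma)}(\Sigma)$: invariant configurations can also contain the branch points $\widetilde w,\widetilde z$, so the fixed locus has several components, and one must argue that only the component corresponding to $\Sym^{g(\Sigma)}(\Sigma)$ meets the Lagrangian tori (this is where the basepoint bookkeeping and the choice of which lifts of the attaching curves to use enter). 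Second, whether each $\alpha$- or $\beta$-circle lifts to one or two curves upstairs depends on its mod-$2$ intersection with an arc from $w$ to $z$, so the assertion that $(\widetilde\Sigma,\widetilde{\boldsymbol\alpha},\widetilde{\boldsymbol\beta},\widetilde w)$ is a valid Heegaard diagram of the correct genus requires choosing the downstairs diagram carefully. Both issues are handled in \cite{HLL-hf-dcov}, but they are not automatic and should be flagged rather than folded into ``the technical heart of the argument.''
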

\noindent(The cited theorem also discusses links with more components.)

Let
$\pi_{!}\co H^2(\Sigma(Y,K))\to H^2(Y)$ be the transfer
homomorphism. Since $\pi_{!}\circ \pi^*\co H^2(Y)\to H^2(Y)$ is
multiplication by $2$, if $H^2(Y)$ has no 2-torsion then $\pi^*$ is
injective. (In fact, the statement is still true if $H^2(Y)$ has
2-torsion~\cite{Ueki-top-branched}.) So, if $\spinc$ and $\spinc'$ are
distinct $\SpinC$-structures on $Y$ then
$\pi^*\spinc\neq \pi^*\spinc'$ (see~\cite[Lemma 4.7]{HLL-hf-dcov}). In
particular, if $H^2(Y)$ has no 2-torsion, we can sum the spectral
sequence in Theorem~\ref{thm:HLL} over all $\SpinC$-structures to
obtain a spectral sequence
\begin{equation}\label{eq:HLL-sum}
  \HFa(\Sigma(Y,K)) \otimes \FF_2[\theta^{-1}, \theta]]
  \Rightarrow
  \HFa(Y) \otimes \FF_2[\theta^{-1}, \theta]].
\end{equation}

Now, consider a strongly invertible knot $K\subset S^3$ with axis $A$
(a circle). The intersection of $K$ with $A$ decomposes $A$ into two
intervals; label them arbitrarily $A_1$ and $A_2$. Let $\tau\co S^3\to
S^3$ be rotation by $180^\circ$ around $A$ and let $\ol{K}\subset
S^3=S^3/\tau$ be the image of $K\cup A_1$ under the quotient map.

\begin{proof}[Proof of Theorem~\ref{thm:dcov}]
  Let $K'$ be the preimage of $A_2$ in $\Sigma(S^3,\ol{K})$.  We claim
  that $\Sigma(S^3,K)$ is the double cover of $\Sigma(S^3,\ol{K})$
  branched along $K'$. Since $|H^2(\Sigma(S^3,\ol{K}))|=\det(\ol{K})$
  is odd, Formula~\eqref{eq:HLL-sum} then gives the desired spectral
  sequence.

  The claim follows from~\cite[Lemma 3.1]{AB-top-invert-AS}; we
  explain this case of their proof. Let $q\co S^3\to S^3/\tau$ be the
  quotient map. The map $q$ is the double cover branched along
  $q(A)$. Fix a Seifert surface $\ol{F}$ for $\ol{K}$ meeting $q(A_2)$
  transversely, and let $F=q^{-1}(\ol{F})$. Let $Y$ be the result of
  cutting $S^3$ along $F$, so $\bdy Y=F_+\cup_K F_-$. Since $F$ is
  taken to itself by $\tau$, $\tau$ induces an involution of $Y$. The
  fixed set of this involution is a copy of (the preimage of) $A_2$; the two copies of
  $A_1$ (one in $F_+$ and the other in $F_-$) are exchanged by the
  involution.

  We can form $\Sigma(S^3,K)$ by gluing two copies of $Y$
  together. The involution $\tau$ induces an involution $\wt{\tau}$ of
  $\Sigma(S^3,K)$ with fixed set the preimage of $A_2$. The deck
  transformation of $\Sigma(S^3,K)$ gives another involution $\wt{\sigma}$,
  exchanging the two copies of $Y$ and commuting with $\wt{\tau}$. So,
  $\wt{\sigma}$ descends to an involution $\sigma$ of
  $\Sigma(S^3,K)/\wt{\tau}$. The quotient
  $\bigl(\Sigma(S^3,K)/\wt{\tau}\bigr)/\sigma$ is $S^3$, and the fixed
  set of $\sigma$ is the preimage of $\ol{K}$. Thus,
  $\Sigma(S^3,K)/\wt{\tau}=\Sigma(S^3,\ol{K})$ and
  \begin{equation}\label{eq:Kp-cov} 
    \Sigma(S^3,K)=\Sigma(\Sigma(S^3,\ol{K}),K'),
  \end{equation}
  as claimed.
\end{proof}

We conclude with a relative version of Theorem~\ref{thm:dcov}, which
follows from a theorem of Large~\cite{Large-hf-loc} (see
also~\cite{Hen-hf-dcov}). As in the proof of Theorem~\ref{thm:dcov},
the preimage of $A_2$ is a knot $K'$ inside $\Sigma(S^3,\ol{K})$;
let $\wt{K}'$ be the preimage of $K'$ in $\Sigma(S^3,K)$, which is also the
preimage of $A_2$ in $\Sigma(S^3,K)$.
\begin{theorem}\label{thm:HFK-dcov}
  With notation as in Theorem~\ref{thm:dcov}, there is a spectral
  sequence with $E^1$-page given by
  $\HFKa(\Sigma(S^3,K),\wt{K}')\otimes\FF_2[\theta^{-1},\theta]]$
  converging to
  $\HFKa(\Sigma(S^3,\ol{K}),K')\otimes\FF_2[\theta^{-1},\theta]]$.
\end{theorem}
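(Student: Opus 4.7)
The plan is to mirror the proof of Theorem~\ref{thm:dcov}, replacing the Hendricks-Lidman-Lipshitz localization theorem for $\HFa$ of branched double covers (Theorem~\ref{thm:HLL}) with its knot Floer analogue, applied to the preimage of the branch locus as the distinguished knot.

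The geometric setup is already in place from the proof of Theorem~\ref{thm:dcov}: by Equation~\eqref{eq:Kp-cov}, $\Sigma(S^3,K)$ is the double cover of $\Sigma(S^3,\ol{K})$ branched along $K'$, and $\wt{K}'$ is precisely the preimage of the branch locus, so it coincides with the fixed set of the deck involution $\wt\sigma$. I would therefore begin with a doubly-pointed Heegaard diagram for $(\Sigma(S^3,\ol{K}),K')$, placing both basepoints $w$ and $z$ on $K'$, and lift it to a $\wt\sigma$-equivariant doubly-pointed Heegaard diagram for $(\Sigma(S^3,K),\wt{K}')$. Because the basepoints lie on the fixed locus of $\wt\sigma$, the induced $\mathbb{Z}/2$-action on $\Sym^g(\Sigma)$ preserves the basepoints together with the two Lagrangian tori.

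With this equivariant setup, I would invoke Large's Lagrangian intersection Floer localization theorem~\cite{Large-hf-loc}, exactly as Hendricks-Lidman-Lipshitz do for $\HFa$ and as Hendricks does for knot Floer homology of periodic knots~\cite{Hen-hf-periodic}. Large's theorem provides a spectral sequence whose $E^1$-page is the Tate complex of the $\mathbb{Z}/2$-action on $\widehat{\CFK}(\Sigma(S^3,K),\wt{K}';\FF_2)\otimes\FF_2[\theta^{-1},\theta]]$ and whose $E^\infty$-page is Floer homology of the fixed-point Lagrangians. Identifying the fixed-point data with the symmetric product of the quotient Heegaard surface identifies this with $\HFKa(\Sigma(S^3,\ol{K}),K';\pi^*\spinc)$, summed over Spin$^c$-structures $\spinc'$ on $\Sigma(S^3,K)$ with $\pi^*\spinc'=\pi^*\spinc$, where $\pi\from\Sigma(S^3,K)\to\Sigma(S^3,\ol{K})$ is the branched cover. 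As in the proof of Theorem~\ref{thm:dcov}, since $|H^2(\Sigma(S^3,\ol{K}))|=\det(\ol{K})$ is odd, the pullback on Spin$^c$-structures is injective and no summand is double-counted, so summing over all Spin$^c$-structures assembles the $\spinc$-refined spectral sequences into the claimed one converging to $\HFKa(\Sigma(S^3,\ol{K}),K')\otimes\FF_2[\theta^{-1},\theta]]$.

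The one point I expect to require genuine verification---rather than a direct quotation of the proof of Theorem~\ref{thm:HLL}---is that Large's localization theorem is compatible with the doubly-pointed setting used to define $\HFKa$: concretely, one must check that the localization map respects the avoidance of \emph{both} basepoints, and that the fixed-point Lagrangians of the equivariant symmetric product correspond exactly to the chosen doubly-pointed Heegaard diagram for $(\Sigma(S^3,\ol{K}),K')$ with basepoints on $K'$. Both facts should hold essentially automatically from the placement of $w$ and $z$ on the fixed set, but this is the only place where the argument does not reduce verbatim to Theorem~\ref{thm:HLL}, and thus the only step that needs to be written out in detail.
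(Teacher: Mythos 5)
Your approach is essentially sound, but it re-derives a result that the paper simply cites. The paper's proof is a two-step quotation: Large's Theorem 1.5 in \cite{Large-hf-loc} is \emph{already} stated for knot Floer homology of branched double covers---it gives a spectral sequence (or at least a rank inequality, from which the spectral sequence can be read off) from $\HFKa(\Sigma(Y,L),\wt{L})\otimes\FF_2[\theta^{-1},\theta]]$ to $\HFKa(Y,L)\otimes\FF_2[\theta^{-1},\theta]]$ for any nullhomologous knot $L$ in a $3$-manifold $Y$, with $\wt{L}$ the preimage of $L$. Feeding in $Y=\Sigma(S^3,\ol{K})$, $L=K'$, and the identification of Formula~\eqref{eq:Kp-cov} gives the theorem immediately. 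In particular, the issue you single out as the one step requiring genuine verification---that the Lagrangian localization machinery is compatible with the doubly-pointed Heegaard data defining $\HFKa$, with both basepoints on the fixed locus---is exactly what Large's Theorem 1.5 already packages; you do not need to check it anew, only to cite it. Likewise the $\SpinC$-structure bookkeeping and the argument that $|H^2(\Sigma(S^3,\ol{K}))|=\det(\ol{K})$ is odd, which are needed in Theorem~\ref{thm:dcov} because Theorem~\ref{thm:HLL} is stated per-$\SpinC$, are not needed here because Large's Theorem 1.5 is stated for the total group. Your proposal would work if carried through carefully, but it amounts to re-proving the relevant special case of Large's theorem rather than applying it.
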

\begin{proof}
  Large proved that given a nullhomologous knot $L$ in a 3-manifold
  $Y$ and a branched double cover $\Sigma(Y,L)$ of $(Y,L)$ there is a
  spectral sequence
  \[
    \HFKa(\Sigma(Y,L),\wt{L})\otimes\FF_2[\theta^{-1},\theta]]\Rightarrow
    \HFKa(Y,L)\otimes\FF_2[\theta^{-1},\theta]],
  \]
  where $\wt{L}$ is the preimage of $L$~\cite[Theorem
  1.5]{Large-hf-loc}. (He states the result as a rank inequality.)  By
  Formula~\eqref{eq:Kp-cov}, Large's theorem with $Y=\Sigma(S^3,\ol{K})$
  and $L=K'$ gives the result.
\end{proof}

\begin{remark}
  The spectral sequence in Theorem~\ref{thm:dcov} decomposes along
  $\SpinC$-structures on $\Sigma(\ol{K})$, and the spectral sequence
  in Theorem~\ref{thm:HFK-dcov} decomposes along $\SpinC$-structures
  on $\Sigma(\ol{K})$ and Alexander gradings, i.e., along relative
  $\SpinC$-structures on $(\Sigma(\ol{K}),K')$.
\end{remark}

\vspace{-0.3cm}
\bibliographystyle{myalpha}
\bibliography{newbibfile}

\providecommand{\bysame}{\leavevmode\hbox to3em{\hrulefill}\thinspace}
\providecommand{\MR}{\relax\ifhmode\unskip\space\fi MR }
\providecommand{\MRhref}[2]{%
  \href{http://www.ams.org/mathscinet-getitem?mr=#1}{#2}
}
\providecommand{\href}[2]{#2}
\begin{thebibliography}{GLW18}

\bibitem[AB]{AB-top-invert-AS}
Antonio Alfieri and Keegan Boyle, \emph{Strongly invertible knots, invariant
  surfaces, and the {A}tiyah-{S}inger signature theorem}, arXiv:2109.09915.

\bibitem[AK]{AK-kh-anchored}
Rostislav Akhmechet and Mikhail Khovanov, \emph{Anchored foams and annular
  homology}, arXiv:2105.00921.

\bibitem[BI]{BI-top-invert-g4}
Keegan Boyle and Ahmad Issa, \emph{Equivariant 4-genera of strongly invertible
  and periodic knots}, arXiv:2101.05413.

\bibitem[BM]{KAT-kh-knotatlas}
Dror Bar-Natan, Scott Morrison, et~al., \emph{The {K}not {A}tlas},
  \url{http://katlas.org/}.

\bibitem[BN07]{Bar-kh-fast}
Dror Bar-Natan, \emph{Fast {K}hovanov homology computations}, J. Knot Theory
  Ramifications \textbf{16} (2007), no.~3, 243--255.

\bibitem[Boy21]{Boyle-top-alt-inv}
Keegan Boyle, \emph{Involutions of alternating links}, Proc. Amer. Math. Soc.
  \textbf{149} (2021), no.~7, 3113--3128.

\bibitem[BPS21]{BPS-kh-periodic}
Maciej Borodzik, Wojciech Politarczyk, and Marithania Silvero, \emph{Khovanov
  homotopy type, periodic links and localizations}, Math. Ann. \textbf{380}
  (2021), no.~3-4, 1233--1309.

\bibitem[Cor]{Cornish-kh-loc}
James Cornish, \emph{Sutured annular {K}hovanov homology and two periodic
  braids}, arXiv:1606.03034.

\bibitem[Cou09]{Couture-kh-divides}
Olivier Couture, \emph{Khovanov homology for signed divides}, Algebr. Geom.
  Topol. \textbf{9} (2009), no.~4, 1987--2026.

\bibitem[Dav]{Champ-kh-AKh}
Champ Davis, \emph{Annular {K}hovanov homology calculator},
  \url{http://pages.uoregon.edu/champd/Annular.py}.

\bibitem[DMS]{DMS-hf-invert}
Irving Dai, Abhishek Mallick, and Matthew Stoffregen, \emph{Equivariant knots
  and knot {F}loer homology}, arXiv:2201.01875.

\bibitem[GLW18]{GW-kh-Schur}
J.~Elisenda Grigsby, Anthony~M. Licata, and Stephan~M. Wehrli, \emph{Annular
  {K}hovanov homology and knotted {S}chur-{W}eyl representations}, Compos.
  Math. \textbf{154} (2018), no.~3, 459--502.

\bibitem[GW10]{GW-kh-sutured}
J.~Elisenda Grigsby and Stephan~M. Wehrli, \emph{Khovanov homology, sutured
  {F}loer homology and annular links}, Algebr. Geom. Topol. \textbf{10} (2010),
  no.~4, 2009--2039.

\bibitem[Hay]{Hay-top-corks}
Kyle Hayden, \emph{Corks, covers, and complex curves}, arXiv:2107.06856.

\bibitem[Hen12]{Hen-hf-dcov}
Kristen Hendricks, \emph{A rank inequality for the knot {F}loer homology of
  double branched covers}, Algebr. Geom. Topol. \textbf{12} (2012), no.~4,
  2127--2178.

\bibitem[Hen15]{Hen-hf-periodic}
\bysame, \emph{Localization and the link {F}loer homology of doubly-periodic
  knots}, J. Symplectic Geom. \textbf{13} (2015), no.~3, 545--608.

\bibitem[HKK16]{HKK-Kh-htpy}
Po~Hu, Daniel Kriz, and Igor Kriz, \emph{Field theories, stable homotopy theory
  and {K}hovanov homology}, Topology Proc. \textbf{48} (2016), 327--360.

\bibitem[HLL]{HLL-hf-dcov}
Kristen Hendricks, Tye Lidman, and Robert Lipshitz, \emph{Rank inequalities for
  the {H}eegaard {F}loer homology of branched covers}, arXiv:2007.03023.

\bibitem[HLS16]{HLS-flexible}
Kristen Hendricks, Robert Lipshitz, and Sucharit Sarkar, \emph{A flexible
  construction of equivariant {F}loer homology and applications}, J. Topol.
  \textbf{9} (2016), no.~4, 1153--1236.

\bibitem[HS]{HS-kh-exotic}
Kyle Hayden and Isaac Sundberg, \emph{{K}hovanov homology and exotic surfaces
  in the 4-ball}, arXiv:2108.04810.

\bibitem[Kho00]{Kho-kh-categorification}
Mikhail Khovanov, \emph{A categorification of the {J}ones polynomial}, Duke
  Math. J. \textbf{101} (2000), no.~3, 359--426.

\bibitem[Kho02]{Kho-kh-tangles}
\bysame, \emph{A functor-valued invariant of tangles}, Algebr. Geom. Topol.
  \textbf{2} (2002), 665--741 (electronic).

\bibitem[Lar]{Large-hf-loc}
Tim Large, \emph{Equivariant {F}loer theory and double covers of
  three-manifolds}, arXiv:1910.12119.

\bibitem[LLS17]{LLS-cube}
Tyler Lawson, Robert Lipshitz, and Sucharit Sarkar, \emph{The cube and the
  burnside category}, categorification in geometry, topology, and physics,
  Contemp. Math., vol. 684, Amer. Math. Soc., Providence, RI, 2017, pp.~63--85.

\bibitem[LLS20]{LLS-khovanov-product}
\bysame, \emph{Khovanov homotopy type, {B}urnside category and products}, Geom.
  Topol. \textbf{24} (2020), no.~2, 623--745.

\bibitem[LM18]{LT-hf-covering}
Tye Lidman and Ciprian Manolescu, \emph{Floer homology and covering spaces},
  Geom. Topol. \textbf{22} (2018), no.~5, 2817--2838.

\bibitem[LS]{RS-mixed}
Robert Lipshitz and Sucharit Sarkar, \emph{A mixed invariant of non-orientable
  surfaces in equivariant {K}hovanov homology}, arXiv:2109.09018.

\bibitem[LS14]{RS-khovanov}
\bysame, \emph{A {K}hovanov stable homotopy type}, J. Amer. Math. Soc.
  \textbf{27} (2014), no.~4, 983--1042.

\bibitem[LT16]{LT-hf-HH}
Robert Lipshitz and David Treumann, \emph{Noncommutative {H}odge-to-de {R}ham
  spectral sequence and the {H}eegaard {F}loer homology of double covers}, J.
  Eur. Math. Soc. (JEMS) \textbf{18} (2016), no.~2, 281--325.

\bibitem[LW21]{LW-kh-invert}
Andrew Lobb and Liam Watson, \emph{A refinement of {K}hovanov homology}, Geom.
  Topol. \textbf{25} (2021), no.~4, 1861--1917.

\bibitem[Mur88]{Murasugi-knot-per-Jones}
Kunio Murasugi, \emph{Jones polynomials of periodic links}, Pacific J. Math.
  \textbf{131} (1988), no.~2, 319--329.

\bibitem[OSz05]{OSz-hf-branched}
Peter Ozsv{\'a}th and Zolt{\'a}n Szab{\'o}, \emph{On the {H}eegaard {F}loer
  homology of branched double-covers}, Adv. Math. \textbf{194} (2005), no.~1,
  1--33.

\bibitem[Ras]{Rasmussen-kh-closed}
Jacob Rasmussen, \emph{Khovanov's invariant for closed surfaces},
  arXiv:math/0502527.

\bibitem[Rob13]{Roberts-kh-dcov}
Lawrence~P. Roberts, \emph{On knot {F}loer homology in double branched covers},
  Geom. Topol. \textbf{17} (2013), no.~1, 413--467.

\bibitem[Sak86]{Sakuma-top-invert}
Makoto Sakuma, \emph{On strongly invertible knots}, Algebraic and topological
  theories ({K}inosaki, 1984), Kinokuniya, Tokyo, 1986, pp.~176--196.

\bibitem[Sna18]{Snape-kh-inversion}
Michael Snape, \emph{Homological invariants of strongly invertible knots},
  Ph.D. thesis, University of Glasgow, December 2018.

\bibitem[SS]{SS-kh-surf}
Isaac Sundberg and Jonah Swann, \emph{Relative {K}hovanov-{J}acobsson classes},
  arXiv:2103.01438.

\bibitem[SS10]{SS-kh-loc}
Paul Seidel and Ivan Smith, \emph{Localization for involutions in {F}loer
  cohomology}, Geom. Funct. Anal. \textbf{20} (2010), no.~6, 1464--1501.

\bibitem[SZ]{SZ-kh-localization}
Matthew Stoffregen and Melissa Zhang, \emph{Localization in {K}hovanov
  homology}, arXiv:1810.04769.

\bibitem[Tan06]{Tanaka-kh-closed}
Kokoro Tanaka, \emph{Khovanov-{J}acobsson numbers and invariants of
  surface-knots derived from {B}ar-{N}atan's theory}, Proc. Amer. Math. Soc.
  \textbf{134} (2006), no.~12, 3685--3689.

\bibitem[Uek14]{Ueki-top-branched}
Jun Ueki, \emph{On the homology of branched coverings of 3-manifolds}, Nagoya
  Math. J. \textbf{213} (2014), 21--39.

\bibitem[Wat17]{Watson-kh-invert}
Liam Watson, \emph{Khovanov homology and the symmetry group of a knot}, Adv.
  Math. \textbf{313} (2017), 915--946.

\bibitem[Yok91]{Yokota-knot-periodic}
Yoshiyuki Yokota, \emph{The {J}ones polynomial of periodic knots}, Proc. Amer.
  Math. Soc. \textbf{113} (1991), no.~3, 889--894.

\bibitem[Zha18]{Zhang18:periodic}
Melissa Zhang, \emph{A rank inequality for the annular {K}hovanov homology of
  2-periodic links}, Algebr. Geom. Topol. \textbf{18} (2018), no.~2,
  1147--1194.

\end{thebibliography}
\vspace{1cm}
\end{document}